\newcommand*{\C}{\ensuremath{\mathbb C}\xspace}
\newcommand*{\M}{\ensuremath{\mathcal M}\xspace}
\newcommand*{\PP}{\ensuremath{\mathbb P}\xspace}
\newcommand*{\Tst}{\ensuremath{T_{\mathrm{st}}}\xspace}
\DeclareMathOperator{\card}{card}
\DeclareMathOperator{\Jac}{Jac}
\DeclareMathOperator{\ord}{ord}
\DeclareMathOperator{\trdeg}{tr.deg}
\let\eps\varepsilon
\let\ph\varphi
\newcommand*{\lst}[3][1]{\ensuremath{#2_{#1}, \ldots, #2_{#3}}\xspace}
\newtheorem{proposition}{Proposition}[section]
\newtheorem{theorem}[proposition]{Theorem}
\newtheorem{lemma}[proposition]{Lemma}
\theoremstyle{remark}
\newtheorem{note}[proposition]{Remark}
\theoremstyle{definition}
\newtheorem{definition}[proposition]{Definition}
\newtheorem{Not}[proposition]{Notation}
\author{Serge Lvovski}
\address{National Research University Higher School of Economics,
  Russian Federation 
  \hfil\break
Scientific Research Institute for System Analysis of the National
Research Centre ``Kurchatov Institute''}
\email{lvovski@gmail.com}
\title[Stabilization of fields of meromorphic functions]{Stabilization
  of fields of meromorphic functions on neighborhoods of a rational curve}
\keywords{Neighborhoods of rational curves, birational automorphism, 
  quadratic transformation, ramification}
\subjclass{32H99, 14E07}
\thanks{This study was partially supported by the HSE
  University Basic Research Program and by the project FNEF-2024-0001
(Reg. no. 1023032100070-3-1.2.1)}
\begin{document}

\begin{abstract}
Suppose that $F$ is a smooth and connected complex surface (not
necessarily compact) containing a smooth rational curve~$C$ with positive
self-intersection. We prove that there exists a neighborhood $U\supset
C$ such that any meromorphic function defined on a connected
neighborhood of~$C$ in~$U$ can be extended to a meromorphic function
on the entire~$U$.  
\end{abstract}

\maketitle

\section{Introduction}

Suppose that $F$ is a smooth and connected complex surface containing
a curve $C\cong\PP^1$ with positive self-intersection ($\PP^1$ is the
Riemann sphere). It was shown in \cite{field} that the field of
meromorphic functions $\M(F)$ is isomorphic to either~\C, or the
field~$\C(X)$ of rational functions in one variable, or the
field~$\C(X,Y)$ of rational functions. If $U\supset V\supset C$ are
connected neighborhoods of~$C$, then $\M(U)$ is naturally embedded in
$\M(V)$.

In this paper we prove that the fields $\M(U)$ stabilize as
the connected neighborhoods $U\supset C$ narrow down.

\begin{theorem}\label{theorem}
Suppose that $F$ is a smooth and connected complex surface containing
a curve $C\cong\PP^1$ with positive self-intersection. Then
there exists a connected neighborhood $U\supset C$
such that for any connected neighborhood $V\supset C$, $V\subset U$,
the embedding of fields $\M(U)\hookrightarrow \M(V)$ induced by the
inclusion $V\subset U$ is an isomorphism.

In other words, there exists a connected neighborhood $U\supset C$
such that, for any connected neighborhood $V\supset C$, $V\subset U$,
any meromorphic function on~$V$ can be extended to a meromorphic
function on~$U$.
\end{theorem}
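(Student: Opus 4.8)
I would begin by fixing a countable fundamental system of connected neighborhoods $U_1\supset U_2\supset\cdots$ of $C$ with $\bigcap_i U_i=C$. Restriction of meromorphic functions produces an increasing tower $\M(U_1)\subseteq\M(U_2)\subseteq\cdots$, and every connected $V\supset C$ satisfies $U_j\subseteq V\subseteq U_i$ for suitable indices, so it suffices to understand this tower. Each $U_i$ contains $C$ with $C^2>0$, so the trichotomy recalled from \cite{field} applies to it: every $\M(U_i)$ is $\C$, $\C(X)$, or $\C(X,Y)$, in particular finitely generated over $\C$ of transcendence degree $d_i\le 2$. Since the tower is increasing, $d_i$ is non-decreasing and bounded, hence eventually constant, equal to some $d$; after replacing $U_1$ by a suitable $U_{i_0}$ I may assume $\trdeg_\C\M(V)=d$ for every connected $V$ with $C\subseteq V\subseteq U_1$. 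Each inclusion $\M(U_1)\hookrightarrow\M(V)$ is then a finitely generated extension of equal transcendence degree, hence finite algebraic, of degree $n_V=[\M(V):\M(U_1)]$.

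\textbf{Reduction to a uniform degree bound.}
The key point is that it now suffices to bound $n_V\le N$ uniformly in $V\subseteq U_1$. Indeed, the degrees $[\M(U_i):\M(U_1)]\le N$ are non-decreasing, hence eventually constant for $i\ge i_1$; equality of degrees in a finite nested extension forces $\M(U_i)=\M(U_{i_1})$ for all $i\ge i_1$. Then for any connected $V$ with $C\subseteq V\subseteq U_{i_1}$, choosing $j\ge i_1$ with $U_j\subseteq V$ gives $\M(U_{i_1})\subseteq\M(V)\subseteq\M(U_j)=\M(U_{i_1})$, so $U:=U_{i_1}$ is as required. The case $d=0$ is immediate, since $\M(U_1)=\C$ is algebraically closed, whence $\M(V)=\C$ and $n_V=1$.

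\textbf{The bound for $d=1$.}
Here $\M(U_1)\cong\C(X)$, and the generator $X$ is the algebraic reduction, a meromorphic map $U_1\dashrightarrow\PP^1$. First I would check that $X|_C$ is non-constant: otherwise every element of $\C(X)$ is constant along $C$, so $C$ lies in a fibre of the pencil $X$, which is impossible for a curve with $C^2>0$. Put $\delta=\deg(X|_C)$. For $V\subseteq U_1$ we have $\M(V)\cong\C(B_V)$ with $B_V\cong\PP^1$ the base of the algebraic reduction $\psi_V\colon V\dashrightarrow B_V$, and the inclusion $\M(U_1)\subseteq\M(V)$ corresponds to a degree-$n_V$ map $B_V\to\PP^1$ through which $X$ factors. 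Restricting to $C$ factors $X|_C$ as $C\xrightarrow{\psi_V|_C}B_V\to\PP^1$, and comparing degrees yields $n_V\mid\delta$, so $n_V\le\delta$, a bound independent of $V$.

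\textbf{The bound for $d=2$ (the main obstacle) and conclusion.}
Now $\M(U_1)\cong\C(X,Y)$ and the algebraic reduction $\phi\colon U_1\dashrightarrow W$ is birational onto a rational projective surface; shrinking $U_1$ I may assume $\phi$ is biholomorphic near $C$, carrying $C$ to $C_W\subset W$ with $C_W\cong\PP^1$ and $C_W^2=C^2>0$. For $V\subseteq U_1$ one has $\M(V)\cong\C(W_V)$ with a finite map $\rho\colon W_V\to W$ of degree $n_V$, and $C$ lifts to a curve $C_V\subset W_V$ with $\rho$ restricting to an isomorphism $C_V\xrightarrow{\sim}C_W$. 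The difficulty, absent in the curve case, is that a degree-$n_V$ cover of surfaces need not have bounded degree merely because it carries a section over a single curve. I expect this ramification bound to be the crux. The plan is to bound $n_V$ by a ramification analysis: since $C_W^2>0$ the curve $C_W$ is big and meets the branch divisor of $\rho$, while the unramified section $C_V$ lying over $C_W$, together with the positivity of $C_W$, severely constrains how $\rho$ may ramify near $C_W$. Normalizing the pair $(W,C_W)$ by quadratic transformations and applying Riemann--Hurwitz to the restriction of $\rho$ over the moving family of rational curves deforming $C_W$ should produce a bound $n_V\le N$ depending only on $(W,C_W)$. Granting this, the reduction of the second paragraph completes the proof.
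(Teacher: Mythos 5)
There are two genuine gaps, one in each nontrivial case, and in both places the missing content is exactly where the paper does its real work.

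In the case $d=1$ you dismiss in one line the possibility that the generator $X$ of $\M(U_1)\cong\C(X)$ is constant on $C$, arguing that then ``$C$ lies in a fibre of the pencil $X$, which is impossible for a curve with positive self-intersection.'' That inference is Zariski's lemma (components of fibres of a fibration have nonpositive self-intersection), which is valid for \emph{compact} fibred surfaces; here the ``fibre'' is the non-compact divisor of zeroes of $X-a$ on a germ of a neighborhood of $C$, and a compact curve with $(C\cdot C)>0$ can perfectly well be one of its components. The only conclusion intersection theory yields is that the polar divisor of $X-a$ must then also meet $C$ --- which is exactly Lemma~\ref{zeroes/cap_C} and produces no contradiction. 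The paper explicitly allows this situation (pairs \emph{of the first kind}, Proposition~\ref{two-kinds} and Definition~\ref{def:kinds}) and needs a separate, harder argument for it: assuming an infinite strictly increasing tower, one chooses generators $f_j$ \emph{vanishing along} $C$, picks $a$ avoiding the countably many ramification values of the induced maps of rational curves, and shows via the factorization \eqref{eq:fact} that the divisor of zeroes of $f_0-a$ would contain $m_j\to\infty$ distinct components, each meeting the compact curve $C$ by Lemma~\ref{zeroes/cap_C}, which is absurd. Your divisibility bound $n_V\mid\delta$ reproduces only the paper's ``second kind'' argument, where restriction to $C$ embeds all the fields into $\M(C)$; the first-kind case is simply unaccounted for.

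In the case $d=2$ you concede the crux (``Granting this\dots''), so the proof is incomplete exactly at the main obstacle; worse, the bound you hope for is false with the hypotheses you actually have. If the data are only a finite map $\rho\colon W_V\to W$ of degree $n_V$ together with a curve $C_V$ mapped isomorphically onto $C_W$, no bound $N=N(W,C_W)$ exists: take $W=\PP^2$ and $C_W$ a smooth conic; for every $d$, since $H^0(\Oo_{\PP^2}(2d))\to H^0(\Oo_{C_W}(2d))$ is surjective, one can choose $f$ of degree $2d$ with smooth zero curve and with $f|_{C_W}=g^d$, and the normalization of the cyclic cover $z^d=f$ is an irreducible degree-$d$ cover of $\PP^2$ over which $C_W$ splits completely, hence carries a section over $C_W$. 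What really constrains $\rho$ is the \emph{meromorphic section over a two-dimensional neighborhood germ} of $C$, and the paper exploits it in a structurally different way: it never bounds the degrees $n_V$ directly, but assumes an infinite strictly increasing tower and shows (Proposition~\ref{branching}, resting on the discriminant Lemma~\ref{discriminant}, on Lemma~\ref{zeroes/cap_C} to force a ramification curve to meet $C$, and on the quadratic-transformation Proposition~\ref{quadratic1} to keep that curve visible as a plane curve after the blowdowns) that each step of the tower makes $\ph_0$ strongly ramified along a curve meeting $C$; compactness of $C$ lets one assume infinitely many of these curves share a single germ $B$, and multiplicativity of ramification indices (Proposition~\ref{mult.ram}) yields $e(\ph_0|_{U_n},B\cap U_n)\ge e_1\cdots e_n\to\infty$, a contradiction. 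Incidentally, your normalization ``shrinking $U_1$ I may assume $\phi$ is biholomorphic near $C$'' is also unjustified --- the map given by a pair of generators may have indeterminacy points on $C$ and may map $C$ onto a singular curve --- though, unlike the two gaps above, the paper's argument shows this assumption is never needed.
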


\begin{Not}
If $F$ is a smooth and connected complex surface containing
a curve $C\cong\PP^1$ with positive self-intersection, then
  \begin{equation*}
\tau(F,C)=\max_{U\supset C}\trdeg_\C\M(U),    
  \end{equation*}
where maximum is taken over all connected neighborhoods $U\supset C$. 
\end{Not}
It is clear that $\tau(F,C)\le 2$ and that $\tau(U,C)\le \tau(V,C)$ if
$U\supset V\supset C$.

Of course, Theorem~\ref{theorem} is trivial if $\tau(F,C)=0$, for in
this case all the fields involved consist entirely of constants. The
overall idea of the proof of Theorem~\ref{theorem} is the same for both
nontrivial cases $\tau(F,C)=1$ and $\tau(F,C)=2$, but
technicalities differ significantly.

The paper is organised as follows. In Section~\ref{sec:lemma} we
prove, for further reference, 
a folklore lemma that will be used in the proof for both cases. In
Section~\ref{tau=1} we prove Theorem~\ref{theorem} in the
case~$\tau(F,C)=1$. In Section~\ref{sec-gen-mer} we recall some known
general properties of meromorphic mappings. In Section~\ref{sec:ag} we
prove an auxiliary result on birational isomorphisms of rational
surfaces; the content of this section belongs entirely to algebraic
geometry. Finally, in Section~\ref{tau=2} we treat the case
$\tau(F,C)=2$, thus completing the proof of Theorem~\ref{theorem}. 

\subsection{Notation and conventions}
All topological terms refer to the classical topology unless specified
otherwise.

If $F$ is a smooth and connected complex manifold, then $\M(F)$ denotes
the field of meromorphic functions on~$F$.

By a \emph{curve} on a complex manifold we will mean its closed
one-dimensional irreducible (and reduced) complex subspace.

If $C$ is a \emph{compact} curve on a complex manifold~$F$,
$U_1,U_2\supset C$ are its neighborhoods, and if $B_1\subset U_1$,
$B_2\subset U_2$ are two curves meeting~$C$, we will say that
\emph{$B_1$ and $B_2$ have the same germ along~$C$} if there exists a
neighborhood $V\supset C$, $V\subset U_1\cap U_2$, such that $B_1\cap
V=B_2\cap V$.

The $n$-dimensional complex projective space is denoted by~$\PP^n$; in
particular, $\PP^1$ is the Riemann sphere.

If $P\in\C[\lst Xn]$ is a polynomial in $n$ variables, then
$V(P)\subset\C^n$ is its zero locus.

In diagrams, solid arrows refer to holomorphic mappings
and dashed arrows refer to meromorphic mappings
(see Section~\ref{sec-gen-mer}). 

In arguments where the only complex spaces involved are complex
projectve varieties, the terms `meromorphic mapping' and `rational
mapping' will be used interchangeably, and the same applies to the
pair `meromorphic function\slash rational function'.

If a field~$K$ contains~\C as a subfield, then by generators of~$K$ we
will always mean generators over~\C.

\section{A simple lemma}\label{sec:lemma}

\begin{lemma}\label{zeroes/cap_C}
Suppose that $F$ is a smooth and connected analytic surface containing
a curve $C\subset F$, $C\cong\PP^1$, with positive
self-intersection. If $f$ is a non-constant meromorphic function
on~$F$, then its divisor of zeroes \textup(as well as its divisor of
poles\textup) has non-empty intersection with~$C$.  
\end{lemma}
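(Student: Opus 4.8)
The plan is to study the restriction $f|_C$ and reduce everything to the standard vanishing $H^0(\PP^1,\Oo_{\PP^1}(-m))=0$ for $m>0$. First I would dispose of indeterminacy points: writing $f=g/h$ locally with $g,h$ holomorphic and coprime, the indeterminacy locus is $\{g=h=0\}$, which lies in both the zero set $\{g=0\}$ and the pole set $\{h=0\}$. Hence if $C$ contains an indeterminacy point of $f$, then both the divisor of zeroes and the divisor of poles already meet $C$ there and we are done. So I may assume $C$ avoids the indeterminacy locus, in which case $f$ defines a genuine holomorphic map $f|_C\colon C\to\PP^1$.

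Since $C\cong\PP^1$, the map $f|_C$ is either non-constant or constant. If it is non-constant, it is surjective onto $\PP^1$, and choosing $p,q\in C$ with $f|_C(p)=0$ and $f|_C(q)=\infty$ exhibits a zero and a pole of $f$ on $C$ (neither $p$ nor $q$ is a pole resp.\ a zero conflict, since $f$ is holomorphic and non-constant near each); thus both divisors meet $C$ and the lemma holds. The whole difficulty is therefore to rule out the case that $f|_C$ is a constant $c\in\PP^1$, and this is exactly where the hypothesis $C^2>0$ must enter, since for a curve of nonpositive self-intersection the conclusion is false.

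The main step is the claim that if $h$ is holomorphic on a connected neighborhood $W\supset C$ and $h|_C\equiv0$, then $h\equiv0$. Let $\Ii$ be the ideal sheaf of $C$ in $W$. I would prove by induction that $h\in\Ii^k$ for every $k\ge1$. Since $h|_C\equiv0$ we have $h\in\Ii$. Assuming $h\in\Ii^k$, the image of $h$ in $\Ii^k/\Ii^{k+1}$ is a global section of this $\Oo_C$-module. Now the conormal sheaf $\Ii/\Ii^2\cong N^*_{C/F}$ is a line bundle of degree $-C^2$ on $C\cong\PP^1$, whence $\Ii^k/\Ii^{k+1}\cong\operatorname{Sym}^k N^*_{C/F}\cong\Oo_{\PP^1}(-kC^2)$, which has negative degree because $C^2>0$ and $k\ge1$. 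Therefore $H^0(\PP^1,\Oo_{\PP^1}(-kC^2))=0$, so the image of $h$ vanishes and $h\in\Ii^{k+1}$, completing the induction. Thus $h$ vanishes to infinite order along $C$; in local coordinates in which $C=\{y=0\}$ this forces $h$ to vanish identically near $C$, and hence on all of the connected surface by the identity theorem.

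Finally I would apply the claim to finish the excluded case. If $f|_C\equiv c$ with $c$ finite, then $f$ has no poles on the compact curve $C$, so $f$ is holomorphic on some neighborhood $W\supset C$; applying the claim to $h=f-c$ gives $f\equiv c$ on $F$, contradicting that $f$ is non-constant. If $f|_C\equiv\infty$, the same argument applied to the non-constant meromorphic function $1/f$, for which $(1/f)|_C\equiv0$, yields $1/f\equiv0$, which is absurd. Thus $f|_C$ cannot be constant, and the two cases already treated complete the proof. I expect the verification of $\Ii^k/\Ii^{k+1}\cong\Oo_{\PP^1}(-kC^2)$ and the passage from infinite-order vanishing to identical vanishing to be the only points requiring genuine care.
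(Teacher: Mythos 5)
Your proof is correct, and its key step is genuinely different from the paper's. Both arguments turn on the same rigidity phenomenon for holomorphic functions near $C$: the paper first observes that any function holomorphic on a connected neighborhood of $C$ is constant, while you prove that a holomorphic function vanishing on $C$ vanishes identically near $C$; these are equivalent (apply your claim to $h-c$, where $c$ is the constant value that $h$ necessarily takes on the compact curve $C$). The difference lies in how the rigidity is established. The paper invokes the ``good neighborhoods'' of \cite[Definition 4.1]{field}: a neighborhood of $C$ is swept by compact rational curves, deformations of $C$, any two of which intersect, so a holomorphic function is constant on each curve and the constants agree. You instead use the conormal filtration: since $C$ is a smooth curve with $C^2>0$, one has $\Ii^k/\Ii^{k+1}\cong\Oo_{\PP^1}(-kC^2)$, a line bundle of negative degree with no nonzero sections, so by induction $h\in\Ii^k$ for all $k$, whence $h$ vanishes to infinite order along $C$ and, by Taylor expansion in coordinates adapted to $C$ and the identity theorem, identically. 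What each approach buys: the paper's route is short given the deformation theory of rational curves already developed in \cite{field}, on which the rest of the paper relies anyway; your route is self-contained (no moving of the curve) and is strictly more general, since $H^0$ of a negative-degree line bundle vanishes on a compact Riemann surface of any genus, so your argument proves the lemma for an arbitrary smooth compact curve of positive self-intersection, not just a rational one. Your endgame---the trichotomy on $f|_C$ (an indeterminacy point on $C$ puts both divisors through $C$ at once; a non-constant $f|_C$ is surjective onto $\PP^1$, producing a zero and a pole on $C$; a constant $f|_C$ is excluded by the rigidity claim applied to $f-c$ or to $1/f$)---is a mild repackaging of the paper's direct contradiction via $1/f$, and all the small verifications it requires (indeterminacy points lie on both the zero and the polar divisor; $f$ is holomorphic near $C$ when $f|_C\equiv c$ is finite) are carried out correctly.
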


\begin{proof}
Observe first that a holomorphic function on any connected
neighborhood of~$C$ must be constant. Indeed, if $U\supset C$ is such
a neighborhood and $f\colon U\to \C$ is holomorphic, then it suffices
to show that $f$ is constant on $V\subset U$, where $V$ is a good
neighborhood of~$C$ in the sense of \cite[Definition 4.1]{field}. The
good neighborhood~$V$ is swept by curves isomorphic to~$C$, so $f$ is
contant on each of these curves, and any two such curves have
non-empty intersetion, so all these constants are the same. 

Assume now that the divisor of zeroes of~$f$ does not
intersect~$C$; then there exists a connected neighborhood $W\supset
C$, $W\subset F$, such that $f$ has no zeroes on~$W$. Then the
function $1/f$ is holomorphic on $W$, hence it is constant
on~$W$, hence $1/f$ is constant on~$F$, hence $f$ is
constant on~$F$, a contradiction.

The assertion about poles of~$f$ is obtained by substituting
$1/f$ for $f$ in the argument above.
\end{proof}

\section{The $\tau(F,C)=1$ case}\label{tau=1}

In this section we account for the case~$\tau(F,C)=1$ in
Theorem~\ref{theorem}. To that end it suffices to proof the following.

\begin{proposition}\label{prop:trdeg1}
Suppose that $F$  is a smooth and connected complex surface containing
a curve $C\cong\PP^1$ with positive self-intersection and that, for
any connected neighborhood $U\supset C$, $\trdeg_\C\M(U)=1$. Then there
exists a connected neigborhood $U\supset C$ such that for any
open and connected~$V\supset C$, $V\subset U$, the inclusion
$V\hookrightarrow U$ induces an isomorphism of fields of meromorphic
functions. 
\end{proposition}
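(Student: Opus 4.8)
The plan is to exploit the fact that when $\tau(F,C)=1$, every neighborhood $U$ of $C$ has a one-dimensional field of meromorphic functions, so $\M(U)$ is (by Lüroth, or by the structure theorem from \cite{field} cited in the introduction) a subfield of $\C(X)$ containing $\C$, hence $\M(U)\cong\C(X)$ whenever it is nontrivial. Pick any nonconstant $f\in\M(U_0)$ on some fixed neighborhood $U_0$. Restricting $f$ gives a holomorphic (meromorphic) map $f\colon U_0\dashrightarrow\PP^1$ after shrinking, and the key geometric object is the restriction $f|_C\colon C\to\PP^1$. By Lemma~\ref{zeroes/cap_C} the divisors of zeroes and poles of $f$ both meet $C$, so $f|_C$ is a nonconstant morphism $\PP^1\to\PP^1$ of some degree $d\ge 1$.

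Let me sketch the core of the argument.

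\begin{proof}[Proof sketch]
Choose a connected neighborhood $U\supset C$ admitting a nonconstant $f\in\M(U)$ with $f|_C$ nonconstant; such $f$ exists after shrinking, since $\trdeg_\C\M(U)=1$ forces the generator to have nonconstant restriction to $C$ once poles and zeroes are forced onto $C$ by Lemma~\ref{zeroes/cap_C}. Let $d=\deg(f|_C)$ be minimal over all such pairs $(U,f)$; minimality is the crux. I claim this $U$ works. Indeed, suppose $V\subset U$ is connected with $C\subset V$, and let $g\in\M(V)$ be arbitrary. Since $\trdeg_\C\M(V)=1$ and $f|_V$ is nonconstant, $g$ is algebraic over $\C(f)$; so there is an irreducible polynomial relation $P(f,g)=0$ over $\C$, i.e.\ $g$ lies in a finite algebraic extension of $\C(f|_V)\subset\M(V)$. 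The plan is to show this extension is trivial, i.e.\ $g\in\C(f)\subset\M(U)$, which gives the desired extension of $g$ to $U$ and hence the isomorphism $\M(U)\xrightarrow{\sim}\M(V)$.
\end{proof}

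To close the gap, I would translate the algebraic relation into geometry along $C$. The pair $(f,g)$ defines a meromorphic map $V\dashrightarrow\PP^1\times\PP^1$ whose image lies on the curve $\{P=0\}$, and restricting to $C$ gives a factorization of $f|_C\colon\PP^1\to\PP^1$ through the normalization of $\{P=0\}$. If $g\notin\C(f)$, the fiber field $\C(f,g)\supsetneq\C(f)$ has degree $>1$, and I expect this to produce, on a suitable neighborhood obtained by an unramified-enough base change (or directly by composing $f$ with the covering map of the normalized curve), a nonconstant meromorphic function whose restriction to $C$ has degree strictly smaller than $d$ — contradicting minimality. The main obstacle, and the step I expect to be delicate, is precisely this last reduction: one must check that the algebraic field extension $\C(f,g)/\C(f)$, which is an abstract statement about $\M(V)$, actually corresponds to a genuine cover of a neighborhood of $C$ realized by meromorphic functions on that neighborhood, and that the degree of the restriction to $C$ drops as the field degree would suggest. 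Controlling ramification of $f|_C$ at points of $C$ and ensuring the covering neighborhood still contains a curve isomorphic to $C$ with positive self-intersection (so that $\tau=1$ and Lemma~\ref{zeroes/cap_C} remain available there) is where the real work lies.
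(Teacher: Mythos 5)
Your argument has a genuine gap at its very first step, and it is exactly the case the paper spends most of its effort on. From Lemma~\ref{zeroes/cap_C} you infer that, after shrinking, a generator $f$ of $\M(U)$ has nonconstant restriction $f|_C$; this does not follow. The divisor of zeroes of $f$ can meet $C$ because it \emph{contains} $C$ (i.e., $f$ vanishes identically along $C$), or it can meet $C$ only at indeterminacy points while $f|_C$ is a nonzero constant. The paper formalizes this dichotomy in Proposition~\ref{two-kinds} and Definition~\ref{def:kinds}: for a pair of the \emph{first kind} there is a nonconstant $h\in\M(U)$ constant on~$C$, and then every generator of $\M(U)\cong\C(f)$ is either constant on~$C$ or has a pole along~$C$, so \emph{every} element of $\M(U)$ restricts to a constant (possibly $\infty$) on~$C$. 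In that case your minimal-degree invariant $d=\deg(f|_C)$ does not exist and the restriction map to $\M(C)$ carries no information, so your scheme has no starting point; moreover, the first-kind property passes to all smaller neighborhoods, so it cannot be shrunk away.

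For pairs of the \emph{second kind} your plan is sound but essentially coincides with the paper's own treatment of that subcase: restriction to $C$ gives embeddings $\M(F)\subset\M(U)\subset K=\M(C)\cong\C(T)$, and since $[K:\M(F)]<\infty$ the tower stabilizes at a neighborhood maximizing $[\M(U):\M(F)]$; your ``minimal $\deg(f|_C)$'' is this finiteness in dual form, and the delicate covering/base-change step you flag as the real work is then unnecessary, because one only compares subfields of the fixed field~$K$. What is missing from your proposal is the entire first-kind argument, which is the hard half: the paper assumes an infinite chain $\M(U_0)\subsetneqq\M(U_1)\subsetneqq\cdots$, chooses generators $f_j$ \emph{vanishing along}~$C$, uses uncountability of $\C$ to pick a value $a$ avoiding ramification of all the induced maps of abstract rational curves $C_{j+1}\to C_0$, and then the factorization \eqref{eq:fact} of $f_0-a$ yields $m_j\to\infty$ pairwise distinct components of its divisor of zeroes, each meeting~$C$ by Lemma~\ref{zeroes/cap_C} --- impossible since $C$ is compact. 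Nothing in your sketch substitutes for this argument.
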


Observe that if the hypotheses of Proposition~\ref{prop:trdeg1} are
satisfied, then, according to~\cite{field}, for any open and connected
$U\supset C$ the field $\M(U)$ is isomorphic to the field~$\C(T)$ of
rational functions in one variable, so $\M(U)$ is generated by one
element.

Further on, we will say that a meromorphic function $f$ on a
surface~$F$ is constant on a curve~$C\subset F$ if either $f$ is
constant or there exists an~$\alpha\in\C$ such that the
function~$f-\alpha$ has a zero along~$C$.

\begin{proposition}\label{two-kinds}
Suppose that $F$ is a smooth and connected complex surface containing
a curve $C\cong\PP^1$ with positive self-intersection and that
$\trdeg_\C\M(F)=1$.  Then the following assertions are equivalent.

\begin{enumerate}
\item\label{b1} For any meromorphic function~$f$ that is a generator of
$\M(F)$, $f$ is 
either constant on~$C$ or has a pole along~$C$.

\item\label{b2}
There exists a generator $f$ of $\M(F)$ such that $f$
has a zero along~$C$. 

\item\label{b3}
There exists an $h\in \M(F)$  such that $h$ has a 
pole along~$C$.

\item\label{b4}
There exists a non-constant $h\in \M(F)$  such that $h$ is
constant on~$C$.
\end{enumerate}
\end{proposition}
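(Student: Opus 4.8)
The key object I would introduce is the order function $\nu_C$ along~$C$. Since $F$ is smooth and $C$ is a smooth curve, in local coordinates in which $C$ is a coordinate axis $\{x=0\}$ every meromorphic function $f\ne0$ defined near~$C$ factors as a unit times $x^k$, and $\nu_C(f):=k\in\Z$ is well defined; thus $\nu_C$ is a discrete valuation on $\M(F)$, trivial on the constants~\C. By definition $f$ has a zero (resp.\ a pole) along~$C$ exactly when $\nu_C(f)>0$ (resp.\ $\nu_C(f)<0$), while $f$ is constant on~$C$ exactly when $f$ is constant or $\nu_C(f-\alpha)>0$ for some $\alpha\in\C$. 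From $\trdeg_\C\M(F)=1$ and~\cite{field} we have $\M(F)\cong\C(T)$, whose generators are precisely the degree-one rational functions $(aT+b)/(cT+d)$ with $ad-bc\ne0$; in particular $T-\alpha$, $1/T$ and $1/(T-\alpha)$ are generators. Finally I would invoke the standard classification of discrete valuations of $\C(T)/\C$: a nontrivial one has the same valuation ring as $\ord_{p_0}$ for a unique point $p_0\in\PP^1$, so that $\nu_C(g)$ and $\ord_{p_0}(g)$ have the same sign for every~$g$. This last identification is the technical heart of the argument.

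I would prove the equivalences as the cycle $(1)\Rightarrow(2)\Rightarrow(3)\Rightarrow(1)$ together with $(3)\Leftrightarrow(4)$. Every implication except $(3)\Rightarrow(1)$ is a formal manipulation with reciprocals and shifts. For $(1)\Rightarrow(2)$, apply~$(1)$ to the generator~$T$: if $T$ has a pole along~$C$ then $1/T$ is a generator with a zero along~$C$, and if $T$ is constant on~$C$, say $\nu_C(T-\alpha)>0$, then the generator $T-\alpha$ has a zero along~$C$. For $(2)\Rightarrow(3)$, if a generator $f$ has a zero along~$C$ then $1/f$ has a pole along~$C$. For $(3)\Rightarrow(4)$, if $h$ has a pole along~$C$ then $1/h$ is a non-constant function with a zero along~$C$, hence constant on~$C$ with value~$0$; conversely, for $(4)\Rightarrow(3)$, if a non-constant $h$ is constant on~$C$ with $\nu_C(h-\alpha)>0$, then $1/(h-\alpha)$ has a pole along~$C$.

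The substantive step is $(3)\Rightarrow(1)$. Assertion~$(3)$ provides some $h$ with $\nu_C(h)<0$, so $\nu_C$ is nontrivial; by the classification above it has the same valuation ring as $\ord_{p_0}$ for a point $p_0\in\PP^1$, whence $\nu_C(g)$ and $\ord_{p_0}(g)$ share the same sign for all~$g$. Now let $f$ be an arbitrary generator of $\M(F)=\C(T)$, viewed as a rational function on~$\PP^1$. If $\ord_{p_0}(f)<0$ then $\nu_C(f)<0$ and $f$ has a pole along~$C$. Otherwise $f$ is regular at~$p_0$; setting $\alpha=f(p_0)\in\C$ (finite, and $f-\alpha\ne0$ since $f$ is non-constant) gives $\ord_{p_0}(f-\alpha)\ge1$, hence $\nu_C(f-\alpha)>0$ and $f$ is constant on~$C$. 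So every generator obeys the dichotomy of~$(1)$.

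The main obstacle I anticipate is precisely this identification of the divisorial valuation $\nu_C$ with the order function at a single point of the abstract model~$\PP^1$ of~$\C(T)$. The existential assertions $(2)$, $(3)$, $(4)$ pass into one another freely by taking reciprocals and shifts, but $(1)$ is a \emph{universal} statement over all generators; reducing it to the behaviour at one point is what makes the cycle close, and stating that reduction cleanly (together with checking that $\nu_C$ is genuinely a $\Z$-valued valuation trivial on~\C) is the step requiring the most care.
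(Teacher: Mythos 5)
Your proof is correct, but it takes a genuinely different route from the paper's. The paper proves $(1)\Rightarrow(2)\Rightarrow(3)\Rightarrow(4)$ (all immediate) and then the one substantive implication $(4)\Rightarrow(1)$ by direct manipulation: given a non-constant $h$ with $h\equiv\alpha$ on~$C$ and an arbitrary generator~$f$, it writes $h-\alpha=R(f)$ with $R$ a non-constant rational function, factors
$R(f)=(f-a_1)\cdots(f-a_m)/Q(f)$,
and reads off from the vanishing of the left-hand side along~$C$ that either some $f-a_j$ has a zero along~$C$ (so $f$ is constant on~$C$) or, when the numerator is empty, $f$ has a pole along~$C$. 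You instead close the cycle at $(3)\Rightarrow(1)$ and settle it with the classification of discrete valuations of $\C(T)/\C$: once $(3)$ makes $\nu_C$ non-trivial, $\nu_C$ is a positive multiple of $\ord_{p_0}$ for a single point $p_0$ of the abstract model~$\PP^1$, and the dichotomy in~$(1)$ becomes simply ``pole at~$p_0$'' versus ``finite value $\alpha=f(p_0)$'' --- valid in fact for every non-constant $f$, not only generators. The two arguments are close in substance, since the paper's factorization is in effect an elementary, hands-on verification of the same fact that the places of $\C(T)/\C$ are exactly the points of~$\PP^1$; but they trade differently. Your version imports a standard valuation-theoretic result (plus the check, which you rightly flag, that $\nu_C$ is a genuine $\Z$-valued valuation trivial on~$\C$ --- this needs irreducibility of~$C$ and smoothness of~$F$, both available) and in return treats the pole case and the constant case in one uniform stroke, whereas the paper's computation splits these cases by whether the numerator in its factorization is empty and stays entirely self-contained. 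Your peripheral implications, done by reciprocals and shifts, match the ones the paper dismisses as obvious, so the proposal is complete as written.
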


\begin{proof}
The implications $\eqref{b1}\Rightarrow \eqref{b2}\Rightarrow
\eqref{b3}\Rightarrow \eqref{b4}$ are obvious.  To prove the
implication $\eqref{b4}\Rightarrow \eqref{b1}$, suppose that the
restriction to~$C$ of a meromorphic function~$h$ is identically equal
to~$\alpha$. Then $h-\alpha=R(f)$, where $f$ is a generator of
the field~$\M(F)$ and~$R$ is a non-constant rational function. Thus,
  \begin{equation}\label{1}
h-\alpha=\frac{(f-a_1)\cdot\dots\cdot(f-a_m)}{Q(f)},
  \end{equation}
where \lst am are complex numbers and $Q\in\C[X]$ is a
polynomial such that none of \lst am is its root. (The product in the
numerator in the right-hand side of~\eqref{1} may be empty, in which
case the numerator is meant to be identically~$1$.) Since the
left-hand side of~\eqref{1} is identically zero on~$C$, the
function~$f$ is identically equal to $a_j$ on~$C$, for some~$j$.

If the numerator in~\eqref{1} is identically~$1$, then, obviously,
$\alpha=0$ and $f$ has a pole along~$C$
\end{proof}

\begin{definition}\label{def:kinds}
If the equivalent assertions of
Proposition~\ref{two-kinds} are satisfied for a surface~$F$ and a
curve $C\cong\PP^1$ contained in~$F$, we will say that the
pair~$(F,C)$ is \emph{of the first kind}. Otherwise, we will say
that $(F,C)$ is \emph{of the second kind}. 
\end{definition}

\begin{proposition}
If, for any neighborhood $U\subset C$, the pair~$(U,C)$ is of the
second kind in the sense of 
Definition~\ref{def:kinds}, and if $\tau(F,C)=1$, then
Proposition~\ref{prop:trdeg1} holds for~$(F,C)$.
\end{proposition}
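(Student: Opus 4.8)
The plan is to study restriction of meromorphic functions to the curve~$C$. The crucial consequence of $(U,C)$ being of the second kind is that no meromorphic function on~$U$ has a pole along~$C$ (this is the negation of assertion~\eqref{b3} of Proposition~\ref{two-kinds}). Since the indeterminacy locus of a meromorphic function on a surface is a finite set of points, it cannot contain the curve~$C$; and since $C$ is not a component of any polar divisor, every $h\in\M(U)$ restricts to a meromorphic function $h|_C$ that is defined away from finitely many points of~$C$ and hence, as $C\cong\PP^1$ is compact and smooth, extends to an honest element of $\M(C)\cong\C(T)$. Restriction $\rho_U\colon\M(U)\to\M(C)$, $h\mapsto h|_C$, is thus a well-defined homomorphism of fields containing~\C; being a nonzero homomorphism of fields it is injective, so its image $\rho_U(\M(U))$ is a subfield of $\M(C)$ of transcendence degree~$1$ over~\C. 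As $\M(C)$ is finitely generated of transcendence degree~$1$, it is a finite extension of $\rho_U(\M(U))$, and I set $d(U)=[\M(C):\rho_U(\M(U))]<\infty$.

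Next I would establish monotonicity of $d$ under shrinking. If $V\subset U$ are two connected neighborhoods of~$C$ (both of the second kind, by hypothesis), then the restriction homomorphisms are compatible: for $h\in\M(U)$ one has $(h|_V)|_C=h|_C$, so $\rho_U=\rho_V\circ\iota$, where $\iota\colon\M(U)\hookrightarrow\M(V)$ is the inclusion. Consequently the images are nested, $\rho_U(\M(U))\subseteq\rho_V(\M(V))\subseteq\M(C)$, and multiplicativity of field extension degrees gives $d(V)\mid d(U)$; in particular $d(V)\le d(U)$. Thus $d$ takes values in the positive integers and does not increase as the neighborhood shrinks, so the set of its values has a least element $d_0$, attained at some connected neighborhood~$U_0\supset C$.

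Finally I would extract the conclusion. Let $V\subset U_0$ be any connected neighborhood of~$C$. By minimality $d(V)\ge d_0$, while monotonicity gives $d(V)\le d(U_0)=d_0$; hence $d(V)=d_0$. Combined with the inclusion $\rho_{U_0}(\M(U_0))\subseteq\rho_V(\M(V))$ and the tower law $d_0=[\M(C):\rho_{U_0}(\M(U_0))]=d(V)\cdot[\rho_V(\M(V)):\rho_{U_0}(\M(U_0))]=d_0\cdot[\rho_V(\M(V)):\rho_{U_0}(\M(U_0))]$, this forces $[\rho_V(\M(V)):\rho_{U_0}(\M(U_0))]=1$, i.e.\ $\rho_V(\M(V))=\rho_{U_0}(\M(U_0))$. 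Since $\rho_V$ is injective and $\rho_V(\iota(\M(U_0)))=\rho_{U_0}(\M(U_0))=\rho_V(\M(V))$, the inclusion $\iota\colon\M(U_0)\hookrightarrow\M(V)$ is also surjective, hence an isomorphism. Taking $U=U_0$ therefore yields exactly the statement of Proposition~\ref{prop:trdeg1}.

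I expect the only genuinely delicate point to be the very first one: checking that restriction to~$C$ lands in $\M(C)$ and is a homomorphism of fields. Everything hinges on the second-kind hypothesis ruling out poles along~$C$, and one must be slightly careful that the finitely many indeterminacy points and the finitely many poles of $h|_C$ on~$C$ do no harm, which is where compactness and smoothness of $C\cong\PP^1$ enter. Once restriction is set up as an embedding $\M(U)\hookrightarrow\M(C)$, the remainder is the elementary observation that a non-increasing sequence of positive integers stabilizes, together with multiplicativity of field extension degrees.
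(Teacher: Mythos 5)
Your proof is correct and follows essentially the same route as the paper: under the second-kind hypothesis, restriction to~$C$ embeds every $\M(U)$ into $\M(C)\cong\C(T)$, and a degree count among these subfields forces stabilization. The only (cosmetic) difference is the bookkeeping: you minimize the co-degree $[\M(C):\rho_U(\M(U))]$, whereas the paper maximizes $[\M(U):\M(F)]$ inside the sandwich $\M(F)\subset\M(U)\subset\M(C)$ --- the two are equivalent by multiplicativity of degrees.
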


\begin{proof}
Let $K$ stand for the field of meromorphic functions on~$C\cong\PP^1$;
$K$ is isomorphic to the field of rational functions in one variable
over~\C. Since, by the hypothesis, for any neighborhood $U\supset C$, the pair $(U,C)$ is
of the second kind, for any meromorphic function~$f\in\M(U)$ the
restriction $f|_C$ is well defined; this restriction induces the
embedding $\M(U)\hookrightarrow K$. Thus, for any connected
neighborhoof $U\supset C$ one has
\begin{equation*}
\M(F)\subset \M(U)\subset K.  
\end{equation*}
Since both $K$ and $\M(F)$ are finitely generated extensions of~\C, of
transcendence degree~$1$, one has $[K:\M(F)]<\infty$, so there exists
a neighborhood $U\supset C$ for which $[\M(U):\M(F)]$ is maximal. It
is clear that, once $U\supset V\supset C$, where $V$ is open and
connected, one has $\M(V)=\M(U)$, so the neighborhood~$U$ is the
neighborhood the existence of which we are to prove.
\end{proof}

In view of what we have just proved, to finish the proof of
Proposition~\ref{prop:trdeg1} it remains to account for the case in
which there is a connected neighborhood $U\supset C$ such that the
pair~$(U,C)$ is of the first kind; this is what we will assume from
now on. It is clear from Proposition~\ref{two-kinds} that in this case
the pair $(V,C)$ is also of the first kind for any open and
connected~$V$ such that $U\supset V\supset C$.

\begin{proof}[End of the proof of Proposition~\ref{prop:trdeg1}]
Arguing by contradiction, suppose that there is an infinite
sequence of nested connected neighborhoods
\begin{equation*}
U_0\supset U_1\supset U_2\supset\dots \supset U_n\supset\dots\supset C
\end{equation*}
such that $\M(U_j)\subsetneqq \M(U_{j+1})$ for each~$j$ and all
the~$U_j$ are of the first kind.  For each~$j$, choose once and for
all a meromorphic function $f_j$ on $U_j$ such that $f_j$ has a zero
along~$C$ and is a generator of the field~$\M(U_j)$; this is possible
in view of Proposition~\ref{two-kinds} and our assumption.

Each
embedding of fields $\M(U_j)\hookrightarrow \M(U_{j+1})$ corresponds
to a holomorphic mapping of algebraic curves
$\pi_j\colon C_{j+1}\to C_j$, where $C_j\cong C_{j+1}\cong \PP^1$
and $\deg\pi_j=[\M(U_{j+1}):\M(U_j)]>1$. This mapping is defined as follows:
if
$f_j|_{U_{j+1}}=R(f_{j+1})$, where $R$ is a rational function in one
variable, then the mapping $\pi_j$ is defined by the formula $z\mapsto
R(z)$. (I would like to stress that
the curves~$C_j$ are \emph{not} embedded in~$F$.)

The choice of $f_j$ in each $\M(U_j)$ allows one to identify
each~$C_j$ with the Riemann sphere $\PP^1=\C\cup\{\infty\}$: each
point $x\in C_j$ is identified with $T_j(x)\in \PP^1$, where $T_j$ is
the generator of the field~$\C(T_j)\cong \M(C_j)$ that corresponds
to~$f_j\in\M(U_j)$.

For each $j\ge 0$, put $\deg\pi_j=\delta_j>1$ and
$\ph_j=\pi_0\circ\dots\circ\pi_j$.
Since the field~\C is uncountable, there exists a point $a\in C_0$,
$a\ne\infty$, such that, for any $j\geqslant 0$, any point in
$(\pi_0\circ\dots\circ\pi_j)^{-1}(a)\subset C_{j+1}$ is not a
ramification point of the mapping $\ph_j\colon C_{j+1}\to C_0$ and is
different from $\infty\in C_j$. 

Put $m_j=\delta_0\delta_1\dots \delta_j$. By our choice of the
point~$a$, one has $\card (\ph_j)^{-1}(a)=m_j$.  If
$(\ph_j)^{-1}(a)=\{\lst b{m_j}\}$, then the image of the element
$f_0-a$ in $\M(C_{j+1})=\C(T_{j+1})$ has simple zeroes at the points
\lst b{m_j} and only at these points.  Recalling that
$\M(C_i)\cong\M(U_i)$ for each~$i$ and the embeddings
$\M(C_i)\hookrightarrow \M(C_{i+1})$ are induced by the restriction of
meromorphic functions from $U_i$ to~$U_{i+1}$, we conclude that on
$U_{j+1}$ the identity
\begin{equation}\label{eq:fact}
f_0-a=
\frac{(f_{j+1}-b_1)\cdot\dots\cdot (f_{j+1}-b_{m_j})}{Q(f_{j+1})},  
\end{equation}
holds, where $Q\in\C[X]$ is a polynomial such that none of \lst b{m_j} is
its root. 

It follows from~\eqref{eq:fact} that for each natural $k\le m_j$, the
divisor of zeroes of the function $f_0|_{U_{i+1}}-a$ contains the
divisor of zeroes of the function~$f_{j+1}-b_k$. All these divisors
are different, and
Lemma~\ref{zeroes/cap_C}
implies that each of them  has non-empty
intersection with~$C$. 

Hence, the divisor of zeroes of the meromorphic function $f_0-a$
contains at least $m_j$ different components, and each of these
components has non-empty intersection with~$C$. Since $C$ is compact,
the number of these components must be finte, but
$m_j$ tends to infinity as $j$ tends to
infinity. This contradiction completes the proof.
\end{proof}

\section{Generalities on rational and meromorphic
  mappings}\label{sec-gen-mer}

This section does not contain new results. We just state some
well-known facts about the ramification of meromorphic mappings, in the
form that suits our purposes.

For the definition of meromorphic functions and meromorphic mappings
we refer the reader to~\cite[Section 3]{AndreottiStoll}. 

If $X$ is a smooth complex manifold and \lst[0]fn are meromorphic
functions on $X$, then, if not all these functions are identically
zero, the formula $x\mapsto (f_0(x):\dots:f_n(x))$ defines a
meromorphic mapping from $X$ to~$\PP^n$.

If $X$ is a complex manifold, $Y$ is a smooth projective variety, and
$\ph\colon X\rightarrow Y$ is a meromorphic mapping, then the
indeterminacy locus (in~\cite{AndreottiStoll}, indeterminacy locus is
called just `indeterminacy') of~$\ph$ is a complex subvariety~$I\subset X$ of
codimension at least~$2$; the meromorphic mapping~$\ph$ induces a genuine
holomorphic mapping from $X\setminus I$ to~$Y$.

If $X$ is a projective variety, then meromorphic functions on $X$ are
the same as rational functions on~$X$ in the sense of algebraic
geometry; if $X$ and $Y$ are non-singular projective varieties, then
meromorphic mappings from $X$ to~$Y$ are the same as rational mappings
from $X$ to $Y$.

Suppose that $X$ and~$Y$ are projective varieties and $\ph\colon
X\dasharrow Y$ is a rational, aka meromorphic, mapping from $X$ to
$Y$; the mapping $\ph$ is called \emph{dominant} if $\ph(X\setminus I)$, where
$I\subset X$ is the indeterminancy locus of~$\ph$,
is dense in~$Y$ in the classical topology (or, which is equivalent,
in the Zariski topology). There is a natural bijection between the set
of dominant rational mappings $\ph\colon X\dasharrow Y$ and the set of
embeddings of fields $\iota=\ph^*\colon \M(Y)\hookrightarrow \M(X)$ as
extensions of~\C.

If $\ph\colon X\dasharrow Y$ is a dominant meromorphic mapping of
projective varieties of equal dimension, with indeterminacy
locus~$I\subset X$, then its \emph{degree}~$\deg \ph$ is the
cardianlity of a general fiber of the mapping $\ph|_{X\setminus
  I}\colon X\setminus I\to Y$. Degree of $\ph$ equals the degree of
the field extension $\M(X)\supset\ph^*\M(Y)$.

From this point and to the end of this section, we will be speaking
solely about complex sufraces, even though much of what follows can be
restated for complex varieties of dimension greater than~$2$. 

If $X$ is a smooth and connected complex surface, $Y$ is a projective
variety, and $\ph\colon X\dasharrow Y$ is a meromorphic mapping, then
the indeterminacy locus of~$\ph$ is a discrete subset~$I\subset X$. If
$E\subset X$ is a curve (i.e., a one-dimensional irreducible complex
subspace), then by $\ph(E)$ we will mean $\ph(E\setminus I)$, where $I$
is the indeterminacy locus of~$\ph$.

The section of the line bundle $\omega_{X\setminus I}^{-1}\otimes
\ph^*\omega_Y$ which, in local coordinates, is the Jacobian determinant
of~$\ph$, will be denoted by~$\Jac(\ph)$. If $\Jac(\ph)$ is not
identically zero but has zeroes, then the closure of its zero locus is
an analytic subspace in~$X$ by virtue of the Remmert--Stein theorem
(see for example \cite[Section 3]{Bishop}) and all the irreducible
components of this closure are curves in $X$.

\begin{definition}\label{ram_index}
Suppose that $X$ is a smooth and connected complex surface, $Y$
is a projective surface, and $\ph\colon X\dasharrow Y$ is a
meromorphic mapping such that $\Jac(\ph)$ is not identically zero.

If $D\subset X$ is a curve such that $\ph(D)$ is not a point,
then the \emph{ramification
  index} of~$\ph$ along~$D$ is the number $1+\ord_D\Jac(\ph)$.

The ramification index of~$\ph$ along~$D$ will be denoted
by~$e(\ph,D)$. 
\end{definition}  

The following proposition is very well known in the context of
algebraic surfaces.

\begin{proposition}\label{mult.ram}
Suppose that $X$ is a smooth complex surface, $Y$ and $Z$ are
smooth projective surfaces, and $\ph\colon X\dasharrow Y$ and
$\psi\colon Y\dasharrow Z$ are meromorphic mappings such that neither
$\Jac(\ph)$ nor $\Jac(\psi)$ is identically zero.

If $D_1\subset X$ a curve such
that $\ph(D_1)$ is not a point, $\ph(D_1)\subset D_2$, where
$D_2\subset Y$ is an irreducible curve, and $\psi(D_2)\subset Z$ is
not a point either, then
\begin{equation*}
e(\psi\circ\ph,D_1)=e(\psi,D_2)\cdot e(\ph,D_1). 
\end{equation*}
\end{proposition}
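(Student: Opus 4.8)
The plan is to reduce everything to a local computation at a general point of~$D_1$ and then to invoke the chain rule for Jacobian determinants. Since $\ord_{D_1}\Jac(\ph)$ and the analogous quantities are computed at a general point of the relevant curve, I may freely discard the proper closed (hence, along a curve, at most discrete) subsets where things degenerate: the indeterminacy loci of~$\ph$, $\psi$ and $\psi\circ\ph$; the locus where $\ph|_{D_1}$ fails to be a local isomorphism onto~$D_2$; and the points of~$D_1$ that map into the proper closed subset of~$D_2$ along which $\Jac(\psi)$ vanishes to more than its generic order. Because $\ph(D_1)$ is not a point and $D_2$ is irreducible, $\ph(D_1)$ is dense in~$D_2$, and a general point of~$D_1$ avoids all of these bad loci at once.

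First I would record the chain rule. Working in local coordinates near a general point of~$D_1$, its image on~$Y$, and the corresponding image on~$Z$, the differential of a composite is the composite of the differentials, so multiplicativity of determinants gives the identity of sections
$$\Jac(\psi\circ\ph)=\ph^*\bigl(\Jac(\psi)\bigr)\cdot\Jac(\ph)$$
wherever all three maps are holomorphic; the canonical-bundle identification $\omega_X^{-1}\otimes(\psi\circ\ph)^*\omega_Z\cong\bigl(\omega_X^{-1}\otimes\ph^*\omega_Y\bigr)\otimes\ph^*\bigl(\omega_Y^{-1}\otimes\psi^*\omega_Z\bigr)$ makes this an honest equality of local functions once compatible frames are chosen. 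Taking $\ord_{D_1}$ of both sides then reduces the problem to evaluating the two terms on the right.

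The technical heart is the local model. Choose coordinates $(u,v)$ on~$X$ with $D_1=\{u=0\}$ and $(s,t)$ on~$Y$ with $D_2=\{t=0\}$. Since $\ph(D_1)\subset D_2$, one has $\ph^*t=u^{r}\cdot(\text{unit})$ along~$D_1$, where $r=\ord_{D_1}(\ph^*t)\geq1$. A direct expansion of the $2\times2$ Jacobian, using that $\ph|_{D_1}$ is a local isomorphism onto~$D_2$ at a general point (so $\partial(\ph^*s)/\partial v$ is a unit along~$D_1$), shows $\Jac(\ph)=u^{r-1}\cdot(\text{unit})$, whence $\ord_{D_1}\Jac(\ph)=r-1$ and therefore $e(\ph,D_1)=r=\ord_{D_1}(\ph^*t)$. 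Near a general point of~$D_2$ one has $\Jac(\psi)=t^{\,e(\psi,D_2)-1}\cdot(\text{unit})$; pulling this back by~$\ph$ and using $\ph^*t=u^{r}\cdot(\text{unit})$ gives $\ord_{D_1}\bigl(\ph^*\Jac(\psi)\bigr)=r\cdot\bigl(e(\psi,D_2)-1\bigr)$.

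Assembling these via the chain rule,
$$e(\psi\circ\ph,D_1)-1=\ord_{D_1}\bigl(\ph^*\Jac(\psi)\bigr)+\ord_{D_1}\Jac(\ph)=e(\ph,D_1)\bigl(e(\psi,D_2)-1\bigr)+\bigl(e(\ph,D_1)-1\bigr),$$
which simplifies to $e(\psi\circ\ph,D_1)=e(\psi,D_2)\cdot e(\ph,D_1)$. The main obstacle I anticipate is purely organizational: verifying that a general point of~$D_1$ simultaneously avoids every exceptional locus—in particular, that $\ph$ carries it to a general (hence $\psi$-defined, generic-order) point of~$D_2$—and that the line-bundle identifications are compatible enough for the chain rule to hold as stated. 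In fact the latter can be relaxed, since the identity is needed only up to a nonvanishing factor: only orders of vanishing along~$D_1$ enter the final computation.
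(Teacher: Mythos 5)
Your proof is correct and takes essentially the same route as the paper: the chain rule $\Jac(\psi\circ\ph)=\Jac(\ph)\cdot(\Jac(\psi)\circ\ph)$ combined with a local normal-form computation along the curve, which is precisely the content of the paper's Lemma~\ref{mult.ram.lemma} (whose proof the paper leaves to the reader and you carry out explicitly). One cosmetic slip: since $X$ need not be projective, $\ph(D_1)$ need not be dense in~$D_2$, but this is harmless---the locus of $D_2$ where $\Jac(\psi)$ vanishes to more than its generic order is discrete, and its preimage under the non-constant map $\ph|_{D_1}$ is a discrete subset of~$D_1$, so a general point of~$D_1$ still avoids all the bad loci as you claim.
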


\begin{proof}
We begin with a lemma the straightforward proof of which is left to
the reader.
\begin{lemma}\label{mult.ram.lemma}
Suppose that $X=\{(t_1,t_2)\in\C^2\colon |t_1|,|t_2|<\eps\}$ is a
bidisk; put $D=\{(0,t)\}\subset X$. If $\ph\colon X\to \C^2$is a
holomorphic mapping defined by the formula
\begin{equation*}
(z,w)\mapsto (t_1^k\ph_1(t_1,t_2),\ph_2(t_1,t_2)), 
\end{equation*}
where $k>0$ is an integer, the function $t\mapsto \ph_1(0,t)$ is not
identically zero, and the function $t\mapsto \ph_2(0,t)$ is not
constant, then $e(\ph,D)=k$ and $\Jac(\ph)=t_1^{k-1}F(t_1,t_2)$,
where $F(0,t)$ is not identically zero. \textup(Here, by $\Jac$ we
mean the ordinary Jacobian determinant.\textup)
\end{lemma}
Now put $e(\ph,D_1)=e_1$, $e(\ph,D_2)=e_2$.
Since, in local coordinates, $\Jac(\psi\circ\ph)=\Jac(\ph)\cdot
(\Jac(\psi)\circ\ph)$, we have, in view of the lemma, 
\begin{multline*}
e(\psi\circ\ph,D_1)=1+\ord_{D_1}\Jac(\ph)+\ord_{D_1}(\Jac(\psi)\circ\ph)\\
{}=1+e_1-1+e_1(e_2-1)=e_1e_2,  
\end{multline*}
as required.
\end{proof}

\begin{definition}\label{def:strong_ram}
Suppose that $X$ is a smooth complex surface, $Y$ is a
smooth projective surface, and $\ph\colon X\dasharrow Y$ is a
meromorphic mapping such that $\Jac(\ph)$ is not identically zero. We
will say that $\ph$ is \emph{strongly ramified} along an irreducible
curve~$D\subset X$ if $\ph(D)$ is not a
point and $e(\ph,D)>1$.   
\end{definition}

\begin{note}
According to standard definitions, the ramification locus of~$\ph$ is
(the closure of) the zero locus of $\Jac(\ph)$. The aim of
Definition~\ref{def:strong_ram} is to rule out the components of the
ramification locus that are mapped to points.
\end{note}

\section{An auxiliary result from algebraic geometry}\label{sec:ag}

The results of this section are valid, with their proofs, over any
algebraically closed field, of arbitrary characteristic.

First we recall definitions and establish some notation concrning
quadratic transformations. Folowing~\cite{Dolgachev}, we will say that
the \emph{standard quadratic transformation}
$\Tst\colon\PP^2\dasharrow\PP^2$ is the birational automorphism of
$\PP^2$ defined by the formula
\begin{equation*}
(x_0:x_1:x_2)\mapsto (x_1x_2:x_0x_2:x_0x_1).  
\end{equation*}
If $(a,b,c)$ is any triple of non-collinear points of $\PP^2$, then by
$T_{abc}$ we will denote the birational automorphism of $\PP^2$
defined by the formula 
\begin{equation}\label{eq:Tabc}
T_{abc}=A_{abc}^{-1}\circ\Tst\circ
A_{abc}, 
\end{equation}
where $A\colon\PP^2\to \PP^2$ is a linear automorphism of
$\PP^2$ mapping the points $a$, $b$, and~$c$ to $(1:0:0)$, $(0:1:0)$,
and~$(0:0:1)$ respectively.

In the paragraph above we said ``\emph{a} linear automorphism'' since
the authomorphism $A_{abc}$ is not unique. Thus, the notation
$T_{abc}$ does not refer to a sole birational authomorphism: any two
such birational authomorphisms differ by a linear authomorphism of
$\PP^2$ fixing the points~$a$, $b$, and~$c$. To avoid ambiguity, we
choose, once and for all, one authomorhism $A_{abc}$ for any
non-collinear triple~$(a,b,c)$ and say that \emph{the}
authomorphism~$T_{abc}$ is defined by the formula~\eqref{eq:Tabc} for
this particular~$A_{abc}$. For any choice of the linear
automorphism~$A_{abc}$ the authomorphism~$T_{abc}$ blows up the
points~$a$, $b$, and~$c$ and blows down the lines $\overline{ab}$,
$\overline{bc}$, and~$\overline{ca}$. See \cite[Chapter 7]{Dolgachev}
or \cite[Section 1.4]{Cal-Cil} for more details.

We begin with a simple lemma.

\begin{lemma}\label{quadratic0}
Suppose that $\ph\colon \PP^2\dasharrow S$  is a birational isomorphism
between $\PP^2$ and a smooth projective surface $S$ such that $\ph$
induces and isomorphism between Zariski open subsets $U\subset\PP^2$
and~$V\subset S$. 

Suppose that $a\in U$. Put $p=\ph(a)$, and let $\sigma\colon \bar
S\to S$ be the blow-up
of~$S$ at~$p$; let $E\subset \bar S$ be the exceptional curve. 

If the points $b,c\in\PP^2$ are such that $a\notin\overline{bc}$, then
the birational mapping $\psi=\sigma^{-1}\circ \ph\circ T_{abc}\colon
\PP^2\dasharrow \bar S$ has the following properties.
\begin{enumerate}
\item All the line~$\overline{bc}$ except for the points~$b$ and~$c$
  lies in the domain of definition of the rational mapping~$\psi$.
\item The rational mapping~$\psi$ induces an isomorphism between~the
  line~$\overline{bc}$ and the exceptional curve~$E\subset \bar S$.
\item\label{5.1.3}
The above isomorphism maps the point~$b$ \textup(resp.~$c$\textup) to
the point of~$E$ corresponding to the tangent at the point~$p$ to
$\ph(\overline{ac})$ \textup(resp.~$\ph(\overline{ab})$\textup).
\end{enumerate}
\end{lemma}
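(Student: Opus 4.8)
The plan is to reduce everything to an explicit computation with the standard quadratic transformation, exploiting that $\ph$ is a local isomorphism at~$a$. First I would normalize coordinates: replacing $\ph$ by $\ph\circ A_{abc}^{-1}$ (again a birational isomorphism onto~$S$, restricting to an isomorphism on a Zariski-open set containing $(1:0:0)$), I may assume $a=(1:0:0)$, $b=(0:1:0)$, $c=(0:0:1)$ and $T_{abc}=\Tst$. Under this substitution the lines $\overline{ac}$ and $\overline{ab}$ become $\{x_1=0\}$ and $\{x_2=0\}$, while the point $p$ and the curves $\ph(\overline{ac})$, $\ph(\overline{ab})$ are unchanged; hence the three assertions are unaffected and it suffices to analyze $\psi=\sigma^{-1}\circ\ph\circ\Tst$ in these coordinates.

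Second, I would factor the analysis near~$a$ through a blow-up of the source. Since $a\in U$ and $\ph|_U$ is an isomorphism onto $V\ni p$, blowing up is compatible with~$\ph$: letting $\rho\colon\widetilde{\PP^2}\to\PP^2$ be the blow-up of $\PP^2$ at~$a$ with exceptional curve~$E_a$, the isomorphism $\ph$ lifts to an isomorphism $\tilde\ph\colon\rho^{-1}(U)\to\sigma^{-1}(V)$ with $\sigma\circ\tilde\ph=\ph\circ\rho$ and $\tilde\ph(E_a)=E$, whose restriction $\tilde\ph|_{E_a}\colon E_a\to E$ is the isomorphism $\PP(T_a\PP^2)\to\PP(T_pS)$ induced by the differential $d\ph_a$. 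Now $\Tst$ contracts $\overline{bc}=\{x_0=0\}$ to the point $a\in U$ and is holomorphic along $\overline{bc}\setminus\{b,c\}$ (its only indeterminacy points being $a,b,c$), so near this set $\psi=\tilde\ph\circ(\rho^{-1}\circ\Tst)$. This reduces assertions (1) and (2) to the purely ``standard'' map $\rho^{-1}\circ\Tst\colon\PP^2\dasharrow\widetilde{\PP^2}$, and reduces (3) to tracking $d\ph_a$.

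Third, I would carry out the local computation for $\rho^{-1}\circ\Tst$. In the chart $x_1=1$ with coordinates $(u,v)=(x_0,x_2)$ the line $\overline{bc}$ is $\{u=0\}$, and $\Tst(u:1:v)=(v:uv:u)$; passing to the chart $x_0=1$ with coordinates $(y_1,y_2)=(x_1/x_0,\,x_2/x_0)$ centered at~$a$ gives $(y_1,y_2)=(u,\,u/v)$, so in the blow-up chart with slope coordinate $m=y_2/y_1$ on~$E_a$ one computes $\rho^{-1}\circ\Tst\colon (u,v)\mapsto (u,\,1/v)$, which is visibly holomorphic for $v\neq 0,\infty$ and restricts on $\{u=0\}$ to $v\mapsto 1/v$. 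This is the crux, and the main obstacle: although $\ph\circ\Tst$ collapses the whole line $\overline{bc}$ onto the single point~$p$, which is the very centre of the blow-up~$\sigma$, the tangent directions of approach vary, and blowing up spreads them out into a holomorphic injective map. By symmetry the same holds in the chart $x_2=1$ near~$c$, so $\psi$ is holomorphic on all of $\overline{bc}\setminus\{b,c\}$ and restricts to an isomorphism of $\overline{bc}$ onto~$E_a$, which $\tilde\ph$ carries isomorphically onto~$E$; this gives (1) and (2).

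Finally, I would read off (3) from $v\mapsto 1/v$ together with $\tilde\ph|_{E_a}=\PP(d\ph_a)$. The point~$b$ corresponds to $v=0$, hence to slope $m=\infty$, that is, to the tangent direction at~$a$ of the line $\{x_1=0\}=\overline{ac}$; applying~$\tilde\ph$ sends it to the tangent at~$p$ of $\ph(\overline{ac})$. Symmetrically $c$ corresponds to $v=\infty$, hence $m=0$, the direction of $\{x_2=0\}=\overline{ab}$, mapped to the tangent at~$p$ of $\ph(\overline{ab})$. These tangent lines are well defined precisely because $\ph$ is a local isomorphism at~$a$, so the smooth line germs $\overline{ac}$, $\overline{ab}$ have smooth image germs at~$p$. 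Everything beyond the holomorphy statement of the third step is bookkeeping of tangent directions through~$d\ph_a$.
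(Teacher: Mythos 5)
Your proof is correct and follows essentially the same route as the paper: the paper likewise blows up $\PP^2$ at $a$, uses that $\ph$ lifts to an isomorphism between Zariski neighborhoods of the exceptional curves (so the lemma reduces to the case $\ph=\mathrm{id}$, $S=\PP^2$), and then invokes the ``straightforward computation'' with $\Tst$ that it leaves to the reader. The only difference is that you actually carry out that coordinate computation (the chart calculation $(u,v)\mapsto(u,1/v)$ and the identification $\tilde\ph|_{E_a}=\PP(d\ph_a)$, which correctly produces the $b\leftrightarrow c$ swap in assertion~(3)), making explicit what the paper omits.
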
 

\begin{note}
The swap of $b$ and~$c$ in assertion~\eqref{5.1.3} above is not a typo.
\end{note}

\begin{proof}
Let 
$\sigma_0\colon \overline {\PP^2}\to\PP^2$
be the blowup of~$\PP^2$ at the point $a$
and put $\bar\ph=\sigma^{-1}\circ\ph\circ\sigma_1$ (see the
diagram below).
\begin{equation*}
\xymatrix{
&{\overline{\PP^2}\ar[d]^{\sigma_0}}\ar@{-->}[r]_{\bar\ph}&{\bar
    S}\ar[d]^\sigma\\
{\PP^2}\ar@{-->}[r]_{T_{abc}}
   \ar@/^3pc/@{-->}[urr]^{\psi}&
{\PP^2}\ar@{-->}[r]_\ph&S
}  
\end{equation*}
It is clear that $\bar\ph$ maps a Zariski neighborhood of
$\sigma_0^{-1}(a)$ isomorphically to a Zariski neighborhood of~$E$; hence, it
suffices to check the lemma for the case $F=\PP^2$ and
$\ph=\mathrm{id}$; this amounts to a straightforward computation that
is left to the reader. 
\end{proof}

\begin{proposition}\label{quadratic1}
Let $\pi\colon X\to\PP^2$ be a birational morphism, where $X$ is a
smooth projective surface.

If $p\in\PP^2$ is a point such that $\dim\pi^{-1}(p)>0$, then there
exists a birational automorphism $\chi\colon \PP^2\dasharrow \PP^2$
such that for any irreducible curve $E\subset \pi^{-1}(p)$ there is an
irreducible curve $D\subset\PP^2$ for which
$(\pi^{-1}\circ\chi)(D)=E$.
\end{proposition}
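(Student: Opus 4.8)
The plan is to analyze the structure of the birational morphism $\pi\colon X\to\PP^2$ via its factorization into blow-ups, and to reduce the problem to repeated application of the quadratic-transformation calculus established in Lemma~\ref{quadratic0}. Since $\pi$ is a birational morphism of smooth projective surfaces, it factors as a composition of blow-ups of points, $\pi=\pi_1\circ\dots\circ\pi_N$, where each $\pi_i$ is a single point blow-up. The fiber $\pi^{-1}(p)$ is a connected union of rational curves forming a tree of $(-1)$- and more negative curves; the hypothesis $\dim\pi^{-1}(p)>0$ says this fiber contains at least one curve. The goal is to produce a single birational automorphism $\chi\colon\PP^2\dasharrow\PP^2$ so that, after applying $\pi^{-1}\circ\chi$, every irreducible component $E$ of the fiber is the strict-transform image of some honest curve $D\subset\PP^2$, i.e., $E$ is no longer ``hidden'' inside the fiber over a point but comes from a genuine divisor downstairs.

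First I would set up the key reduction: the composite $\sigma^{-1}\circ\ph\circ T_{abc}$ in Lemma~\ref{quadratic0} shows that a single standard quadratic transformation, applied at a point $a$ whose image under a birational isomorphism $\ph$ is the point being blown up, converts a line $\overline{bc}\subset\PP^2$ into the exceptional curve $E$ over $p=\ph(a)$, isomorphically. In other words, Lemma~\ref{quadratic0} is precisely the ``one exceptional curve'' case of the present proposition: it exhibits a birational automorphism (namely $T_{abc}$, with $\chi$ being the relevant $T$) after which the first exceptional divisor $E$ is the image of a line. The strategy is therefore to peel off the blow-ups constituting $\pi$ one at a time, and at each stage use a suitably chosen quadratic transformation to ``expose'' the corresponding exceptional curve as the image of a curve in~$\PP^2$, composing these quadratic transformations to build up the single automorphism~$\chi$.

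Concretely, I would argue by induction on the number $N$ of blow-ups in the factorization of $\pi$, or equivalently on the number of irreducible components of the fibers over points. Writing $\pi=\sigma\circ\pi'$ where $\sigma\colon X\to S$ blows down one $(-1)$-curve $E_0\subset\pi^{-1}(p)$ to a point $q\in S$ and $\pi'\colon S\to\PP^2$ is a birational morphism with one fewer blow-up, I would first handle the outermost exceptional curve using Lemma~\ref{quadratic0} to produce a quadratic transformation $\chi_0$ making $E_0$ the image of a line $\overline{bc}$. The delicate point is that the remaining components of $\pi^{-1}(p)$ meet $E_0$, so after the transformation their configuration over the (now modified) base must be tracked carefully; assertion~\eqref{5.1.3} of Lemma~\ref{quadratic0}, which records exactly where the points $b,c$ land on $E$ in terms of tangent directions, is what lets one keep the infinitely-near structure under control as further blow-ups are exposed. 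One then applies the inductive hypothesis to the lower birational morphism and pulls the resulting automorphism back through $\chi_0$, setting $\chi=\chi_0\circ(\text{lower }\chi)$ or a conjugate thereof.

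The main obstacle, I expect, is the bookkeeping of infinitely-near points: the components of $\pi^{-1}(p)$ beyond the first are obtained by blowing up points lying on (the strict transforms of) previously created exceptional curves, so they do not correspond to honest points of $\PP^2$ but to higher-order infinitely-near data. The content of Lemma~\ref{quadratic0}\eqref{5.1.3}---identifying the images of $b$ and $c$ with tangent directions of $\ph(\overline{ac})$ and $\ph(\overline{ab})$ at $p$---is precisely the tool for converting an infinitely-near point (a tangent direction at $p$) into an honest point $b$ or $c$ that can be blown up at the next stage. Making the inductive step rigorous thus requires verifying that each quadratic transformation in the sequence is well-defined (the relevant triple of points remains non-collinear and the center $a$ stays in the isomorphism locus $U$) and that the successive exposures are compatible, so that in the end every component $E\subset\pi^{-1}(p)$---not merely the last one---arises as $(\pi^{-1}\circ\chi)(D)$ for a genuine irreducible curve $D\subset\PP^2$. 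I would organize this by induction on the length of the chain of infinitely-near points and feed Lemma~\ref{quadratic0} in at each step.
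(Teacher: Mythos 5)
Your strategy is in outline the paper's own: factor $\pi$ into point blow-ups, apply Lemma~\ref{quadratic0} once per blow-up so that each exceptional curve becomes the isomorphic image of a line, compose the quadratic transformations into $\chi$, and use assertion~\eqref{5.1.3} to turn tangent directions (infinitely-near points) into honest centers for the next transformation. The gap is in how you set up the induction. You propose to peel off the last blow-up, writing $\pi=\sigma\circ\pi'$, apply the inductive hypothesis to $\pi'$, and then invoke Lemma~\ref{quadratic0} at the new center $q$. But the inductive hypothesis---the proposition as stated---is too weak to continue: it asserts only that \emph{some} $\chi'$ exists making each fiber component the image of a curve. It gives no control over whether $q$ lies in the locus where $\pi'^{-1}\circ\chi'$ is a regular isomorphism, nor over whether $q$ avoids the two marked points of each exposed line (the images of $b_k,c_k$, where the map degenerates) or the points where strict transforms of distinct exceptional curves meet. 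Since $q$ typically lies on such a strict transform, and $\chi'$ was produced by the inductive call with no knowledge of $q$, the verification you flag as ``required'' cannot in general be carried out from the statement you are inducting on.

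The paper closes exactly this hole by running the induction \emph{forward} along the tower~\eqref{eq:sigmas}, with a strengthened invariant: it constructs birational isomorphisms $\ph_j\colon\PP^2\dasharrow X_j$ such that each line $\overline{b_kc_k}$ is carried isomorphically onto the strict transform $\tilde E_{j,k}$, \emph{and} the images of $b_k,c_k$ avoid the finite sets $Z_k$ of~\eqref{eq:S_j} (the images in $E_k$ of the intersections with the other exceptional curves). This second condition is what guarantees at every step that the next center $p_j$ lies in the regular isomorphism locus of $\ph_j$, and it can be maintained because choosing $b_j,c_j$ only requires avoiding finitely many directions and lines---which is where assertion~\eqref{5.1.3} enters, as you correctly anticipated. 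So your sketch names all the right ingredients, but the strengthened induction hypothesis, rather than the proposition itself, is the missing piece without which the inductive step fails.
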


\begin{proof}
It follows from the hypothesis that $X$ is obtained from $\PP^2$ by a
sequence of blowups of points and $\sigma$ is the composition of the
corresponding blowdowns, $\pi=\sigma_1\circ\ldots\circ \sigma_n$ as
in~\eqref{eq:sigmas},
\begin{equation}\label{eq:sigmas}
X=X_n\xrightarrow{\sigma_n} X_{n-1}\xrightarrow{\sigma_{n-1}}\dots
\xrightarrow{\sigma_2}X_1  \xrightarrow{\sigma_1}X_0=\PP^2.
\end{equation}
where each $\sigma_j$ is the blowup of the point~$p_{j-1}\in X_{j-1}$ (we
put~$p_0=p$); put $E_j=\sigma_j^{-1}(p_{j-1})\subset X_j$.

It suffices to prove the proposition for the case in which
$p\in\PP^2$ is the only point the fiber of~$\pi$ over which has
positive dimension, and this is what we will assume further on. 

For any $j\ge k>0$ we denote by $\tilde E_{k,j}\subset X_j$ the strict
transform of $E_k$ with respect to the birational morphism
$\sigma_{k+1}\circ\ldots\circ \sigma_j\colon X_j\to X_k$; if $k=j$, we
put $\tilde E_{j,j}=E_j$. Finally, for any $i$ put 
\begin{equation}\label{eq:S_j}
Z_i=(\sigma_{i+1}\circ\ldots\circ \sigma_n)\left(\tilde
E_{i,n}\cap\bigcup_{j\ne i}\tilde E_{j,n}\right)\subset E_i.
\end{equation}
Now we construct inductively non-collinear triples of
points~$(a_j,b_j,c_j)$ of $\PP^2$, $0\le j< n$ and birational
isomorphisms $\ph_j\colon \PP^2\dasharrow X_j$, $0\le j\le n$, 
having the
following properties:
\begin{enumerate}
\item the diagram
  \begin{equation*}
\xymatrix{
{X_n}\ar[r]^{\sigma_n}&{\dots}&{X_2}\ar[r]^{\sigma_2}&{X_1}\ar[r]^{\sigma_1}&{X_0}
                           \ar@{=}[r]&{\PP^2}\\
{\PP^2}\ar@{-->}[u]_{\ph_n}\ar@{-->}[r]^{T_{n-1}}&{\dots}&
   {\PP^2}\ar@{-->}[u]_{\ph_2}\ar@{-->}[r]^{T_1}&
   {\PP^2}\ar@{-->}[u]_{\ph_1}\ar@{-->}[r]^{T_0}
&{\PP^2,}\ar@{-->}[u]_{\ph_0}   
} 
  \end{equation*}
where $T_j=T_{a_jb_jc_j}$,
is commutative;
\item
for each~$j\ge k>0$, the birational isomorphism $\ph_j$ maps the line
$\overline{b_kc_k}\subset\PP^2$ isomorphically to $\tilde
E_{j,k}\subset X_j$; 
\item
if $h\colon\overline{b_kc_k}\to \tilde E_{j,k}$ is the holomorphic mapping
induced by the rational mapping
$\sigma_{k+1}\circ\ldots\circ\sigma_j\circ \ph_j\colon \PP^2\dasharrow
X_k$, then $h(\{b_k,c_k\}))\cap Z_k=\varnothing$.
\end{enumerate}
To that end, first we put $\ph_0=\mathrm{id}_{\PP^2}$ and
$a_0=p$. Choose the points $b_0,c_0\in\PP^2$ so that the points
on~$E_1$ correponding to the directions of lines $\overline{a_0b_0}$
an $\overline{a_0c_0}$ differ from all the points of the finite set
$Z_1\subset E_1$ defined in~\eqref{eq:S_j}.  Put $T_0=T_{a_0b_0c_0}$,
$\ph_1=\sigma_1^{-1}\circ\ph_0\circ T_0$; by virtue of
Lemma~\ref{quadratic0}, the birational isomorphism~$\ph_0$ has the
required properties.

\begin{figure}
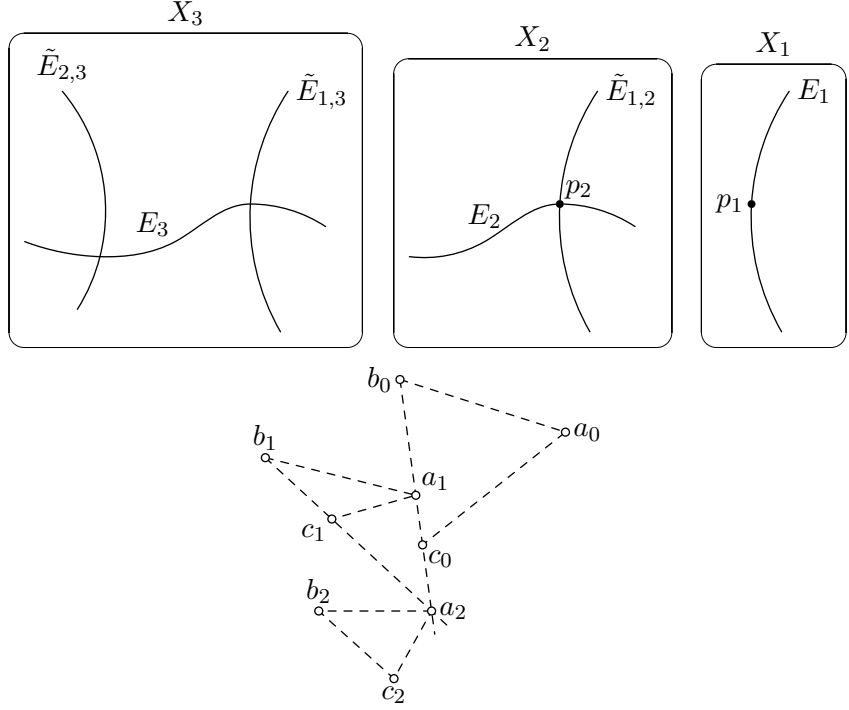

  \begin{tabular}{ccc}
\includegraphics{quad3.mps}&    
\includegraphics{quad2.mps}&    
\includegraphics{quad1.mps}\\[\smallskipamount]
\multicolumn{3}{c}{\includegraphics{quad4.mps}}    
  \end{tabular}
  \caption{A sequence of three blow-ups of $\PP^2$ and a sequence of
    quadratic transformations corresponding to them}\label{figure}
\end{figure}

If the points $a_k$, $b_k$, $c_k$, $0\le k<j$, and the birational
isomorphisms $\ph_k$, $0\le k\le j$, are already constructed, we
proceed as follows. Suppose that the point $p_j\in X_j$ that was blown
up to obtain $X_{j+1}$ lies on the curve $\tilde E_{k,j}\subset X_j$,
$k\le j$. By the induction hypothesis and Lemma~\ref{quadratic0},
$\ph_j(\overline{b_kc_k})=\tilde E_{j,k}$ and $\ph_j$ is a regular
isomorphism onto a Zariski neighborhood of~$p_j$. Now let~$a_j$ be the
point of $\overline{b_kc_k}$ that is mapped to~$p_j$ by~$\ph_j$; we
choose the points $b_j,c_j\in\PP^2$ so that the lines
$\overline{a_j,b_j}$ and $\overline{a_j,c_j}$ are different from the
lines $\overline{b_k,c_k}$ for~$k<j$ and the points on~$E_{j+1}$
correponding to the directions of lines $\overline{a_jb_j}$ an
$\overline{a_jc_j}$ differ from all the points of the finite set
$Z_{j+1}\subset E_{j+1}$.  If we put
$\ph_{j+1}=\sigma_{j+1}\circ\ph_j\circ T_j$, where
$T_j=T_{a_jb_jc_j}$, the induction step will be completed. This
process is illustrared in Fig.~\ref{figure}, for a certain sequence of
three blow-ups of the plane.

If we put $\chi=T_0\circ\ldots\circ T_{n-1}$, and if $E$ is a
component of $\sigma^{-1}(p)$, then~$E=\tilde E_{j,n}$ for some~$j$
and, for an appropriate line $L=\overline{a_kb_k}\subset \PP^2$,
$(\sigma^{-1}\circ\chi)(L)=\ph_n(L)=E$, as required.
\end{proof}

\section{The $\tau=2$ case}\label{tau=2}

We begin with an algebro-geometric lemma. It is valid over any
algebraically closed field of characterisitic zero, but we will not
pursue algebraic purity.

\begin{lemma}\label{discriminant}
Suppose that the polynomials $\lst[0]A{d-1}  \in\C[X,Y]$, where $d>1$,
are such that the 
polynomial 
\begin{equation}\label{P(X,Y,Z)}
P(X,Y,Z)=Z^d+A_{d-1}Z^{d-1}+\dots+A_0(X,Y) 
\end{equation}
is irreducible, and put $S=V(P)\subset\C^3$.

Then the following assertions hold.
\begin{enumerate}
\item\label{item6.1a}
The discriminant of the polynomial~$P$ with respect to~$Z$ is not
constant as a polynomial in~$X$ and~$Y$.

\item\label{item6.1b}
There exists a projective surface~$\hat S$ that is a desingularization
of the projective closure of~$S$ and such that if $p\colon
\hat S\dasharrow \PP^2$ is the rational mapping induced by $(X,Y,Z)\mapsto
(X,Y)$, then $p$ is strictly ramified along an irreducible curve
$\mathfrak D\subset \hat S$ 
for which $\overline{p(\mathfrak D)}\subset\PP^2$ is the projective
closure of a
component of the zero locus of the discrinminant of~$P$. 
\end{enumerate}
\end{lemma}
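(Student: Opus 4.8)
The plan is to treat the two assertions in turn, both times exploiting that the affine plane $\C^2$ is simply connected.

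For \eqref{item6.1a}, write $\Delta=\operatorname{Disc}_Z(P)\in\C[X,Y]$ for the discriminant of~$P$ with respect to~$Z$. Since the characteristic is zero and $P$ is monic in~$Z$, irreducibility of~$P$ makes it separable as a polynomial in~$Z$ over the field~$\C(X,Y)$, so $\Delta\ne 0$. It then remains to exclude the possibility $\Delta\in\C^*$. But $\Delta$ being a nonzero constant is exactly the condition for the finite free ring extension $\C[X,Y]\hookrightarrow\C[X,Y][Z]/(P)$ to be \'etale, i.e.\ for the projection $\pi\colon S\to\C^2$, $(X,Y,Z)\mapsto(X,Y)$, to be a finite \'etale covering of degree~$d$. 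As $\C^2$ is simply connected and $S$ is connected (being irreducible), such a covering is trivial, forcing $d=1$ and contradicting $d>1$. Hence $\Delta$ is non-constant and $V(\Delta)\subset\C^2$ is a nonempty plane curve.

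For \eqref{item6.1b}, let $\hat S$ be a desingularization of the projective closure $\bar S$ of~$S$, and let $p\colon\hat S\dasharrow\PP^2$ be the meromorphic map induced by $(X,Y,Z)\mapsto(X,Y)$; it is dominant of degree~$d$. In characteristic zero a dominant generically finite map is generically \'etale, so $\Jac(p)\not\equiv 0$ and the ramification indices of Definition~\ref{ram_index} are defined. Over $\C^2\setminus V(\Delta)$ the surface~$S$ is smooth and $\pi$ is \'etale, so $p$ is unramified there; thus the branch locus of~$p$ meeting this affine chart is contained in~$V(\Delta)$. That branch locus is moreover nonempty: were the normalization of~$S$ unramified over all of~$\C^2$, it would be a finite \'etale cover of the simply connected $\C^2$, again forcing $d=1$. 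By purity of the branch locus it has pure codimension~$1$, hence it is a union of some of the irreducible components of~$V(\Delta)$; fix one such component~$\Delta_1$.

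Over the generic point of~$\Delta_1$ the cover is genuinely ramified, so the zero locus $\{\Jac(p)=0\}\subset\hat S$ has an irreducible component~$\mathfrak D$ that is not contracted by~$p$ and satisfies $\overline{p(\mathfrak D)}=\overline{\Delta_1}$. Since $\mathfrak D\subset\{\Jac(p)=0\}$ we have $\ord_{\mathfrak D}\Jac(p)\ge 1$, whence $e(p,\mathfrak D)=1+\ord_{\mathfrak D}\Jac(p)\ge 2$; concretely, at a general point of~$\mathfrak D$ one may choose local coordinates bringing~$p$ to the form $(s,t)\mapsto(s^{e}\cdot\mathrm{unit},\,t)$ with $e\ge 2$, and Lemma~\ref{mult.ram.lemma} then gives $\ord_{\mathfrak D}\Jac(p)=e-1$. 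As $p(\mathfrak D)=\Delta_1$ is a curve, $p$ is strongly ramified along~$\mathfrak D$ in the sense of Definition~\ref{def:strong_ram}, and $\overline{p(\mathfrak D)}$ is the projective closure of the component~$\Delta_1$ of $V(\Delta)$, as required.

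The main obstacle is the bookkeeping in~\eqref{item6.1b}: the zero locus of the discriminant is in general strictly larger than the branch locus, since $S$ may fail to be normal along curves lying over~$V(\Delta)$ (a node of a plane section produces a component of~$V(\Delta)$ over which the normalized cover is unramified). One must therefore not assert that every component of~$V(\Delta)$ is branched, but only extract, via purity together with the triviality argument above, a single component that genuinely carries ramification, and then match the geometric index $e(p,\mathfrak D)=1+\ord_{\mathfrak D}\Jac(p)$ with the local branched-cover order through Lemma~\ref{mult.ram.lemma}.
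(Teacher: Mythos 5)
Your proposal is correct and takes essentially the same route as the paper: both parts hinge on the observation that a degree~$d>1$ cover unramified over the simply connected $\C^2$ would be a trivial covering, contradicting the connectedness of the irreducible surface, and both then locate genuine ramification over a component of the discriminant locus and transfer it to the desingularization, where $e(p,\mathfrak D)=1+\ord_{\mathfrak D}\Jac(p)\ge 2$ via the local normal form of Lemma~\ref{mult.ram.lemma}. The only real difference is a sub-step: where you invoke Zariski--Nagata purity of the branch locus for the normalization $\nu_1\colon S_1\to\C^2$, the paper instead punctures out the finite set $\Sigma=\nu_1(\Sigma_1)$ of images of the singular points of~$S_1$ --- the complement $\C^2\setminus\Sigma$ is still simply connected, and on the smooth surface $S_1\setminus\nu_1^{-1}(\Sigma)$ the ramification locus is automatically a divisor (the zero locus of the Jacobian section), so purity is obtained for free.
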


\begin{proof}
To prove assertion~\eqref{item6.1a}, assume, by way of contradiction, that the
discriminant is constant. Since $P$ is irreducible, this constant is
not zero, so the discriminant of~$P$ never vanishes. Hence the surface
$S$ is smooth and the projecton $p\colon S\to\C^2$
(``forgetting the $Z$ coordinate'') is a covering (with respect to the
classical topology); so, $S$ has $d>1$ connected components, which
is impossible since an irreducible algebraic variety over~\C must be connected in the classical topology (see~\cite[Chapter~VII, Section~2.2, Theorem~1]{Shafarevich}).

To prove~\eqref{item6.1b}, denote by $D\subset\C^2$ the zero locus of
the discriminant of~$P$. In view of what we have just proved, $D$ is a
plane algebraic curve.

Let $\nu\colon S_1\to S$ be the normalization of~$S$ and
put $\nu_1=p\circ\nu$, where $p$ is as above.
The
set~$\Sigma_1\subset S_1$ of singular points of~$S_1$ is finite. It is
clear that $\nu_1^{-1}(\C^2\setminus D)$ is smooth (in particular, $\Sigma=\nu_1(\Sigma_1)$ is contained in~$D$) and that the morphism
${\nu_1}|_{S_1\setminus\nu_1^{-1}(D)}
\colon S_1\setminus\nu_1^{-1}(D)\to\C^2\setminus D$
is unramified.

However, the morphism $\nu_1|_{S_1\setminus \nu_1^{-1}\Sigma}$ must be
ramified somewhere: if it were unramified, then, since
$\C^2\setminus\Sigma$ is simply connected, the irreducible variety
$S_1\setminus \nu_1^{-1}\Sigma$ would consist of $d$ conneted
components, which contradicts \cite[Chapter~VII, Section~2.2,
  Theorem~1]{Shafarevich} again. Thus, $\nu_1$ is ramified over a
curve that is mapped by $\nu_1$ to a component of the curve~$D$. To
complete the proof, it remains to embed $S_1$, as a Zariski open
subset, into a projective surface and take a desingularization~$\hat
S$ of the latter.
\end{proof}

From now on, $F$ will be a smooth and connected complex surface,
$F\supset C$, where $C\cong \PP^1$ and the self-intersection index
of~$C$ is positive, and $\trdeg_\C\M(F)=2$.  Since in this case
$\M(F)$ is, according to~\cite{field}, isomorphic to the field of rational functions~$\C(X,Y)$,
there exists a pair of generators $(f,g)$ of $\M(F)$; the meromorphic
functions $f$ and~$g$ are algebraically independent over~\C.

\begin{Not}\label{def:ph}
In the above setting, $\ph_{f,g}\colon F\dasharrow\PP^2$ is the
meromorphic mapping from $F$ to $\PP^2$ defined by $x\mapsto
(1:f(x):g(x))$.   
\end{Not}

Now let $U\supset V\supset C$ be connected neighborhoods
of~$C$ in~$F$. Since both $\M(U)$ and $\M(V)$ are isomorphic to the field of rational functions~$\C(X,Y)$, one may choose a pair of algebraically independent generators~$(f_0,g_0)$
for~$\M(U)$ and a pair of algebraically independent generators~$(f_1,g_1)$ for~$\M(V)$.

Let $\PP^2_0$ and $\PP^2_1$ be two copies of
$\PP^2$, and let $\pi\colon \PP^2_1\dasharrow
\PP^2_0$ be the rational (=meromorphic) mapping for which
diagram~\eqref{cd} is commutative
\begin{equation}\label{cd}
\xymatrix{
U\ar@{-->}[d]_{\ph_{f_0,g_0}} & V\ar@{_{(}->}[l]_i\ar@{-->}[d]^{\ph_{f_1,g_1}}\\
{\PP^2_0}& {\PP^2_1}\ar@{-->}[l]_\pi  
}
\end{equation}
(if  $\rho$ and~$\tau$ are rational functions in two variables such that
$f_0|_V=\rho(f_1,g_1)$, $g_0|_V=\tau(f_1,g_1)$, then $\pi$ maps a point with
homogeneous coordinates~$(1:x:y)$ to $(1:\rho(x,y):\tau(x:y))$).

\begin{proposition}\label{branching}
Suppose that, in the above setting, $\M(V)\supsetneqq\M(U)$. Then for
any pair $(f_0,g_0)$ of generators of~$\M(U)$ there exists a pair
$(f_1,g_1)$ of generators of~$\M(V)$, an irreducible curve
$\Delta\subset\PP^2_1$, and an irreducible curve \textup(one-dimensional
complex space\textup)~$B\subset V$ such that $\pi$ is strongly
ramified along~$\Delta$, $B\cap C\ne \varnothing$, $\ph_{f_1,g_1}(B)\subset
\Delta$, and $\ph_{f_0,g_0}|_V$ is strongly ramified along~$B$.

Besides, the ramification index $e(\ph_{f_0,g_0}|_V,B)$ is greater or
equal to $e(\pi,\Delta)$.
\end{proposition}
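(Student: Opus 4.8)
The plan is to exploit the factorization $\ph_{f_0,g_0}|_V=\pi\circ\ph_{f_1,g_1}$ read off from the commutative diagram~\eqref{cd}, and to deduce the ramification of the left-hand side from that of $\pi$ via Proposition~\ref{mult.ram}. First I would note that $\M(V)\supsetneqq\M(U)$ forces $\deg\pi=[\M(V):\M(U)]=d>1$. To see that $\pi$ is ramified, I would write $\M(V)=\M(U)(\theta)$ for a primitive element $\theta$, take the minimal polynomial $P(X,Y,Z)$ of $\theta$ over $\M(U)\cong\C(X,Y)$, and apply Lemma~\ref{discriminant}: this produces a smooth projective model $\hat S$ of $\M(V)$ together with a projection $p\colon\hat S\dashrightarrow\PP^2_0$ that is strongly ramified along an irreducible curve $\Do$ lying over a component of the discriminant of~$P$.

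Since $\hat S$ and $\PP^2_1$ are both smooth projective surfaces with function field $\M(V)$, and the maps $p$ and $\pi$ both correspond to the inclusion $\M(U)\hookrightarrow\M(V)$, there is a birational map $\hat S\dashrightarrow\PP^2_1$ commuting with the maps to $\PP^2_0$, along which I would transport $\Do$. The subtlety is that $\Do$ might be contracted, i.e.\ invisible as a curve on $\PP^2_1$. Here I would use that replacing $(f_1,g_1)$ by another pair of generators of $\M(V)$ amounts to composing $\ph_{f_1,g_1}$ with a Cremona transformation of $\PP^2_1$ (leaving $\ph_{f_0,g_0}|_V=\pi\circ\ph_{f_1,g_1}$ unchanged): by Proposition~\ref{quadratic1}, applied to a birational morphism resolving $\hat S\dashrightarrow\PP^2_1$, there is a product of quadratic transformations turning the exceptional component carrying $\Do$ into the strict transform of a genuine curve in $\PP^2_1$. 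After this change of generators, $\pi$ is strongly ramified along an honest irreducible curve $\Delta\subset\PP^2_1$, and its branch curve $\pi(\Delta)$ is not a point. I would, in addition, keep the generators so that $\ph_{f_1,g_1}$ does not contract $C$, equivalently so that $C'=\ph_{f_1,g_1}(C)$ is a curve; this requires that some coordinate, or a ratio of the two, restrict non-constantly to $C$, which should be available because $\trdeg_\C\M(V)=2$.

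Next I would locate $B$. Writing $\Delta=V(G)$, the preimage $\ph_{f_1,g_1}^{-1}(\Delta)$ is the divisor of zeroes of the non-constant meromorphic function $G(1,f_1,g_1)$ on $V$, so by Lemma~\ref{zeroes/cap_C} it meets $C$. To ensure that a component meeting $C$ genuinely dominates $\Delta$ (rather than being contracted), I would use that any two curves in $\PP^2$ intersect: the curve $C'$ meets $\Delta$ at some point $q$, so for a suitable $x\in C$ with $\ph_{f_1,g_1}(x)=q\in\Delta$ (avoiding the discrete indeterminacy locus), the component $B$ of $\ph_{f_1,g_1}^{-1}(\Delta)$ through $x$ satisfies $\ph_{f_1,g_1}(B)=\Delta$ and $B\cap C\ni x$. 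In the degenerate case $C'=\Delta$ one simply takes $B=C$.

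Finally, with $D_1=B$, $D_2=\Delta$, $\ph=\ph_{f_1,g_1}$ and $\psi=\pi$, all hypotheses of Proposition~\ref{mult.ram} are in force, and it yields
\[
e(\ph_{f_0,g_0}|_V,B)=e(\pi,\Delta)\cdot e(\ph_{f_1,g_1},B)\ge e(\pi,\Delta)>1,
\]
so $\ph_{f_0,g_0}|_V$ is strongly ramified along $B$ and the asserted inequality holds. The step I expect to be the main obstacle is the second one: matching the algebraically produced ramification curve with the analytic requirement $B\cap C\neq\varnothing$. Concretely, one must check that the quadratic transformations used to expose $\Delta$ as an honest curve can be chosen without contracting $C'$ and without making every coordinate restrict constantly to $C$, so that $C$ remains non-contracted. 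This is precisely where the positivity of $C^2$ and the structure theory of neighborhoods of $C$ must be invoked, for if $C$ were contracted to a point of $\Delta$ the preimage components meeting $C$ would no longer be guaranteed to dominate~$\Delta$.
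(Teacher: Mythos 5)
Your proposal reproduces the paper's architecture almost exactly: a primitive element $h$ with minimal polynomial $P$, Lemma~\ref{discriminant} producing the model $\hat S$, the projection $p$ and the strongly ramified curve $\Do\subset\hat S$; rationality of $\hat S$; Proposition~\ref{quadratic1}, applied to a resolution $\hat{\hat S}$ of $\hat S\dasharrow\PP^2_1$, to compose with a Cremona transformation $\chi$ --- equivalently, to change the pair of generators $(f_1,g_1)$ while leaving $\ph_{f_0,g_0}|_V=\pi\circ\ph_{f_1,g_1}$ intact --- so that $\Do$ becomes an honest curve $\Delta\subset\PP^2_1$ along which $\pi$ is strongly ramified; and the final appeal to Proposition~\ref{mult.ram} giving $e(\ph_{f_0,g_0}|_V,B)=e(\pi,\Delta)\cdot e(\ph_{f_1,g_1},B)\ge e(\pi,\Delta)$. (A small omission: the minimal polynomial of $\theta$ over $\C(X,Y)$ has rational-function coefficients, so before Lemma~\ref{discriminant} applies one must, as the paper does, multiply $\theta$ by a suitable polynomial in $f_0,g_0$ to make $P$ monic with polynomial coefficients.)

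The genuine gap is precisely at the step you flagged as the main obstacle: the location of $B$. From $q\in C'\cap\Delta$ and $x\in C$ with $\ph_{f_1,g_1}(x)=q$ you conclude that the component of $\ph_{f_1,g_1}^{-1}(\Delta)$ through $x$ dominates $\Delta$, but nothing forces this: any curve through $x$ that $\ph_{f_1,g_1}$ contracts to the point $q$ is equally a component of the zero divisor of $G(1,f_1,g_1)$ through $x$, and all components through $x$ may be of that kind. The word ``suitable'' cannot rescue the argument, since $C'\cap\Delta$ is finite (unless $C'=\Delta$), each of its finitely many points may be of the bad type, and their preimages on $C$ may even lie in the indeterminacy locus of $\ph_{f_1,g_1}$, in which case ``the component through $x$'' is not defined at all. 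Your auxiliary requirement that $\ph_{f_1,g_1}$ not contract $C$ is likewise only asserted (``should be available because $\trdeg_\C\M(V)=2$''), and in any case it does not repair the dominance claim. The paper avoids this entire difficulty by choosing $B$ over $\PP^2_0$ rather than over $\PP^2_1$: it takes an irreducible equation $\delta\in\C[X,Y]$ of the branch curve $D_0=\overline{p(\Do)}\cap\C^2$, notes that $\mu=\delta(f_0,g_0)\in\M(U)$ is non-constant because $f_0,g_0$ are algebraically independent, and applies Lemma~\ref{zeroes/cap_C} on $V$ to obtain a component $B$ of the divisor of zeroes of $\mu$ with $B\cap C\ne\varnothing$ and $\psi(B)\subset\Do$; the conclusion then follows through diagram~\eqref{cd} and Proposition~\ref{mult.ram} exactly as in your last step. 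The moral difference is that $B$ is cut out by a function already lying in $\M(U)$ --- the pullback of the branch curve downstairs --- instead of being sought among preimage components of $\Delta$ that the image of $C$ is supposed to hit; with that choice no intersection of $\ph_{f_1,g_1}(C)$ with $\Delta$ and no non-contraction hypothesis on $C$ is needed, and $C^2>0$ enters only through Lemma~\ref{zeroes/cap_C}.
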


\begin{proof}
Put $d=[\M(V):\M(U)]>1$. 
The mapping $R\mapsto R(f_0,g_0)$, where $R$ is a rational function
in~$T_1$ and~$T_2$, defines an isomorphism $\C(T_1,T_2)\to\M(U)$, and we will
identify $\M(U)$ with $\C(T_1,T_2)$ via this isomorphism. One has
$\M(V)=\M(U)(h)$; multiplying~$h$, if necessary, by an appropriate
polynomial in~$f_0$ and~$g_0$, we may and will assume that $h$ satisfies
the equation
\begin{equation}\label{eqn-for-h:new}
h^d+A_{d-1}(f_0,g_0)h^{d-1}+\dots+A_0(f_0,g_0),  
\end{equation}
where \lst[0]A{d-1} are polynomials in two variables, with irreducible left-hand side.

Now we apply Lemma~\ref{discriminant} to the irreducible polynomial
\begin{equation}\label{eqn-for-Z}
P(X,Y,Z)=Z^d+A_{d-1}(X,Y)Z^{d-1}+\dots+A_0(X,Y);
\end{equation}
let $\hat S$, $p\colon \hat S\dasharrow
\PP^2_0$, and $\mathfrak D\subset \hat S$ be the objects the
existence of which is 
asserted by this lemma;  put $D_0=\overline{p(\mathfrak D)}\cap\C^2$, where $\C^2=\{(1:z:w)\}\subset\PP^2$.

Denote by $\psi\colon V\dasharrow \hat S$ the meromorphic mapping induced
by the meromorphic mapping $x\mapsto (1:f_0(x):g_0(x):h(x))$, from $V$
to~$\PP^3\supset\C^3$.  The formula $(1:z_1:z_2:z_3)\mapsto
(1:z_1:z_2)$ induces a dominant rational mapping $\hat S\dasharrow
\PP^2_0$, of degree~$d$. Let $\delta\in\C[X,Y]$ be an irreducible
equation of the curve~$D_0\subset\C^2$; put
$\mu=\delta(f_0,g_0)\in\M(U)$. Since $\delta$ is a non-constant
polynomial and the meromorphic functions $f_0$ and~$g_0$ are
algebraically independent over~\C,
the meromorphic function~$\mu$ is not
constant;
by Lemma~\ref{zeroes/cap_C}, there exists a
component $B\subset V$ of its divisor of zeroes such that $B\cap C\ne
\varnothing$; one has $\psi(B)\subset\Delta$.

By our construction, $\M(\hat S)\cong\M(V)$ is isomorphic to the field of
rational functions over~\C in two variables, so the surface~$\hat S$ is
rational. Thus, there exists a smooth projective surface $\hat{\hat S}$
and birational morphisms $\sigma\colon \hat{\hat S}\to \hat S$, $\sigma'\colon
\hat{\hat S}\to \PP^2_1$, where $\PP^2_1$ is another copy of~$\PP^2$ (see
diagram~\eqref{diagram2}).
\begin{equation}\label{diagram2}
\xymatrix{
U\ar@{-->}[d]_{\ph_{f_0,g_0}} & V\ar@{_{(}->}[l]_i\ar@{-->}[d]_\psi
   \ar@/^/@{-->}[drrr]^{\ph=\ph_{f_1,g_1}}\\
{\PP^2_0}& {\hat S}\ar@{-->}[l]_{\pi'}&{\hat{\hat S}}\ar[l]_\sigma\ar[r]^{\sigma'}&
{\PP^2_1}&{\PP^2_1}\ar@{-->}[l]_\chi\ar@/^2pc/@{-->}[llll]^{\pi}
}  
\end{equation}

Let $\Delta_1\subset \hat{\hat S}$ be the strict transform of
$\Delta\subset \hat S$
with respect to~$\sigma$. I claim that there exists a birational
isomorphism $\chi\colon \PP^2_1\dasharrow \PP^2_1$ such that
$(\chi^{-1}\circ \sigma')(\Delta_1)$ is a curve. Indeed, if $\Delta_1$
does not lie in a fiber of~$\sigma'$, one may just
put~$\chi=\mathrm{id}$, and if $\Delta_1$ lies in a fiber
of~$\sigma'$, it follows from Proposition~\ref{quadratic1} in which
one puts $X=\hat{\hat S}$ and $\pi=\sigma'$.

Now put $\ph=\chi^{-1}\circ\sigma'\circ \sigma^{-1}\circ\psi\colon
V\dasharrow \PP^2_1$. 
Since $\psi$, by construction, induces an isomorphism between $\M(V)$
and $\M(F)$, and since the arrows~$\sigma$, $\sigma'$, and~$\chi$
in~\eqref{diagram2} are birational, the meromorphic mapping~$\ph$
induces an isomorphism between~$\M(V)$ and $\M(\PP^2_1)$. If
$(w_0:w_1:w_2)$ are homogeneous coordinates on~$\PP^2_1$, then,
putting $f_1=(w_1/w_0)\circ\ph$, $g_1=(w_2/w_0)\circ\ph$, one sees
that $(f_1,g_1)$ is a pair of generators of $\M(V)$ and that
$\ph=\ph_{f_1,g_1}$. Removing the unnecessary objects and morphisms
from diagram~\eqref{diagram2}, we arrive at diagram~\eqref{cd}. 

Now let us put $D=(\chi^{-1}\circ\sigma')(\Delta_1)\subset
\PP^2_1$. By our construction, $\pi$ is strongly ramified along~$D$
and $\ph_{f_1,g_1}(B)\subset D$; hence, $\pi'\circ\psi$ is
strongly ramified along~$B$. On the other hand,
$\pi'\circ\psi=\ph_{f_0,g_0}\circ i$ and the embedding $i\colon
V\hookrightarrow U$ is unramified. Hence, $\ph_{f_0,g_0}|_V$ must be
ramified along~$B$.

Finally, it follows from the commutative diagram~\eqref{cd} and
Proposition~\ref{mult.ram} that
\begin{equation*}
e(\ph_{f_0,g_0}|_V,B)=e(\pi,D)\cdot
e(\ph_{f_1,g_1},B) \ge e(\pi,D),  
\end{equation*}
which proves the second assertion.
\end{proof}

Now we can account for the case $\tau(F,C)=2$ in
Theorem~\ref{theorem}. 

\begin{proposition}
Suppose that $F$ is a smooth connected projective surface such that
$\trdeg_\C\M(F)=2$. Then there exists a connected neighborhood $U\supset C$,
$U\subset F$ such that for any open and connected $V\subset U$,
$V\supset C$ the natural embedding $\M(U)\hookrightarrow \M(V)$ is
identical.
\end{proposition}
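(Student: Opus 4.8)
The plan is to argue by contradiction, imitating the counting argument of the $\tau(F,C)=1$ case but with the \emph{ramification divisor} of a single fixed meromorphic map playing the role that the divisor of zeroes of the fixed function $f_0-a$ played there. Suppose the conclusion fails. Since $C$ admits a fundamental system of connected neighborhoods, one can then build an infinite nested sequence of connected neighborhoods $U_0\supset U_1\supset\dots\supset C$ with $\M(U_j)\subsetneqq\M(U_{j+1})$ for every~$j$. I would fix once and for all a pair of generators $(f_0,g_0)$ of $\M(U_0)$ and consider $\ph_{f_0,g_0}\colon U_0\dasharrow\PP^2$. Since $f_0$ and~$g_0$ are algebraically independent, $\ph_{f_0,g_0}$ is dominant of degree~$1$, hence generically unramified, so $\Jac(\ph_{f_0,g_0})$ is not identically zero and the union $R\subset U_0$ of those components of the closure of its zero locus that are not contracted to a point by~$\ph_{f_0,g_0}$ is a well-defined analytic curve.

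First I would record the finiteness that drives the contradiction: the irreducible components of~$R$ form a locally finite family, so, $C$ being compact, only finitely many of them, say $R_1,\dots,R_N$, meet~$C$. Set $M=\max_i e(\ph_{f_0,g_0},R_i)$. The reason for fixing $(f_0,g_0)$ is that for every connected $V$ with $C\subset V\subset U_0$ one has $\Jac(\ph_{f_0,g_0}|_V)=\Jac(\ph_{f_0,g_0})|_V$, so the order of vanishing along any component of $R\cap V$ is unchanged; consequently $e(\ph_{f_0,g_0}|_V,B)\le M$ for every component~$B$ of $R\cap V$ that meets~$C$. This fixed upper bound is what must eventually be violated.

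Next I would produce ramification exceeding~$M$. Applying Proposition~\ref{branching} successively to the pairs $(U_j,U_{j+1})$, I would build compatible generators $(f_j,g_j)$ of~$\M(U_j)$ together with transition maps $\pi_j\colon\PP^2_{(j+1)}\dasharrow\PP^2_{(j)}$, so that $\ph_{f_0,g_0}|_{U_n}=\pi_0\circ\dots\circ\pi_{n-1}\circ\ph_{f_n,g_n}$, and at each step a curve $B_j\subset U_{j+1}$ meeting~$C$ and an irreducible curve $\Delta_j\subset\PP^2_{(j+1)}$ with $\ph_{f_{j+1},g_{j+1}}(B_j)=\Delta_j$, $e(\pi_j,\Delta_j)\ge2$, and $\ph_{f_j,g_j}|_{U_{j+1}}$ strongly ramified along~$B_j$. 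Using the multiplicativity of ramification indices (Proposition~\ref{mult.ram}), each such $B_j$ is then a component of $R\cap U_{j+1}$, so $e(\ph_{f_0,g_0}|_{U_n},\,\cdot\,)$ along the tracked curve is bounded by~$M$; on the other hand, writing $\ph_{f_0,g_0}|_{U_n}$ as the above composite and applying Proposition~\ref{mult.ram} along the chain of successive images of the tracked curve, I would aim to show that this index grows without bound as $n\to\infty$, the required contradiction.

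The hard part will be exactly this accumulation step. A single application of Proposition~\ref{branching} only guarantees one factor $e(\pi_j,\Delta_j)\ge2$, while naive tracking of one curve down the tower produces the remaining factors $e(\pi_k,\,\cdot\,)$, which a priori equal~$1$. To make the product genuinely unbounded I must arrange, inductively, that the successive images of the tracked curve under $\pi_{n-1},\pi_{n-2},\dots$ remain curves (never contracted to a point, so that Proposition~\ref{mult.ram} applies) and land on curves along which the earlier maps are \emph{also} strongly ramified; equivalently, that the strong-ramification curve supplied at each new stage stacks onto the one inherited from the previous stages. Controlling this interaction between the discriminant curves~$\Delta_j$ furnished by Proposition~\ref{branching} and their images is the genuine technical obstacle, and it is precisely here that the second, quantitative assertion of Proposition~\ref{branching}, rather than the mere existence of strong ramification, is indispensable.
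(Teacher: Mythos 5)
Your setup coincides with the paper's: the nested tower $U_0\supset U_1\supset\dots\supset C$ obtained by contradiction, the inductive application of Proposition~\ref{branching} to produce compatible generators, transition maps $\pi_j$, discriminant curves, and ramification curves $B_j$ meeting~$C$, and the finiteness coming from compactness of~$C$ (your bound~$M$, which the paper invokes in the form ``$\ph_0$ cannot be ramified along infinitely many curves meeting~$C$''). But your proof does not close: you correctly isolate the accumulation step --- forcing \emph{all} the factors $e(\pi_k,\cdot)$ along the chain of images to exceed~$1$, not just the newest one --- and then leave it open, proposing to resolve it by arranging inductively, during the construction, that the strong-ramification curve supplied at each stage ``stacks onto'' the ones inherited before. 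That is a genuine gap, and the constructive route you sketch has no evident foothold: Proposition~\ref{branching} gives you no control over where the new discriminant curve sits relative to the images of previously tracked curves, so there is nothing to induct on. (A minor side remark: ``dominant of degree~$1$'' is not defined in this setting for the non-compact source~$U_0$; what you need, and what holds because $f_0,g_0$ are algebraically independent, is simply $\Jac(\ph_{f_0,g_0})\not\equiv 0$.)

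The missing idea is an extraction argument applied \emph{after} the construction, powered by the very finiteness you already established. Each $B_n$ is a curve meeting~$C$ along which $\ph_0|_{U_n}$ is strongly ramified, so its germ along~$C$ must be the germ of one of your finitely many components $R_1,\dots,R_N$; by pigeonhole, infinitely many $B_n$ share the same germ along~$C$. Discard the other neighborhoods and renumber. Now a \emph{single} irreducible curve $B\subset U_0$ satisfies, for every $j$ simultaneously, that $\ph_j(B\cap U_j)$ is a non-point subset of the irreducible curve~$D_j$; hence $D_j$ is the closure of $\ph_j(B\cap U_j)$, and commutativity of the tower forces $\overline{\pi_j(D_{j+1})}=D_j$. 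In other words, the discriminant curves stack \emph{automatically} once the $B_n$ are aligned --- no inductive arrangement is needed. Proposition~\ref{mult.ram} applied along this chain gives
\begin{equation*}
e(\ph_0|_{U_n},B\cap U_n)\ \ge\ e(\pi_0,D_1)\cdot\ldots\cdot e(\pi_{n-1},D_n)\ \ge\ 2^n,
\end{equation*}
while the left-hand side is bounded by your fixed~$M$ (the order of vanishing of $\Jac(\ph_0)$ along the fixed germ of~$B$ does not change under restriction), which is the desired contradiction. So all your ingredients are present and correctly assembled up to the decisive point, but without the same-germ extraction the argument is incomplete.
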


\begin{proof}
Arguing by contradiction, suppose that there exists an infinite
sequence of nested connected neighborhoods
\begin{equation*}
U_0\supset U_1\supset U_2\supset\dots \supset U_n\supset\dots\supset C
\end{equation*}
such that, for each~$j$, $\trdeg_\C(U_j)=2$ and $\M(U_j)\subsetneqq
\M(U_{j+1})$. For each $j\ge 0$, put $[\M(U_{j+1}):\M(U_j)]=d_j>1$.

According to~\cite{field}, each $\M(U_j)$ is isomorphic to the field
of rational functions over~\C in two variables. Using
Proposition~\ref{branching}, choose inductively, for any~$j\ge0$, a
pair of generators~$(f_j,g_j)$ for~$\M(U_j)$ such that the following
holds:
\begin{enumerate}
\item\label{a1} diagram~\eqref{large-cd}, in which
  $\ph_j=\ph_{f_j,g_j}$ and the rational mappings $\pi_j\colon
  \PP^2_{j+1}\dasharrow \PP^2_j$ come form the expression of
  $f_j|_{U_{j+1}}$ and $g_j|_{U_{j+1}}$ as rational functions of
  $f_{j+1}$ and $g_{j+1}$, is commutative;

\begin{equation}\label{large-cd}
  \xymatrix{
    U_0\ar@{-->}[d]^{\ph_0}&U_1\ar@{-->}[d]^{\ph_1}\ar@{_{(}->}[l]
    &U_2\ar@{-->}[d]^{\ph_2}\ar@{_{(}->}[l]&{\dots}\ar@{_{(}->}[l]
    &U_n\ar@{-->}[d]^{\ph_n}\ar@{_{(}->}[l]&{\dots}\ar@{_{(}->}[l]\\
  {\PP^2_0}&{\PP^2_1}\ar@{-->}[l]^{\pi_0}&{\PP^2_2}\ar@{-->}[l]^{\pi_1}
  &{\dots}\ar@{-->}[l]^{\pi_2}&{\PP^2_n}\ar@{-->}[l]^{\pi_{n-1}}
  &{\dots}\ar@{-->}[l]^{\pi_n}
  }
\end{equation}

\item\label{a2}
each $\pi_j$ is strongly ramified along an irreducible curve
$D_{j+1}\subset\PP^2_{j+1}$; 
\item\label{a3}
for each $j> 0$ there is an irreducible
curve $B_j\subset U_j$ such that $B_j\cap C\ne \varnothing$,
$\ph_j(B_j)$ is not a point, $\ph_j(B_j)\subset D_j$, and
$e(\ph_{j-1}|_{U_j},B_j)\ge e(\pi_{j-1},D_j)$. 
\end{enumerate}

For each natural $j>0$, put
\begin{equation*}
\pi_{0j}=\pi_0\circ \pi_{1}\circ\dots
\circ\pi_{j-1}\colon\PP^2_j\dasharrow\PP^2_0. 
\end{equation*}
It follows from assertion~\eqref{a3} above and
Proposition~\ref{mult.ram} that, for each $n>0$, $\ph_0|U_n$ is
strongly ramified along~$B_n$ and $e(\ph_0|U_n,B_n)\ge
e(\pi_{0n},D_n)$.

Since the curve $C$ is compact, the meromorphic mapping~$\ph_0$ cannot be
ramified along infinitely many different curves (one-dimensioanl
analytic subspaces) meeting~$C$. Hence, among the curves $B_n\subset
U_n$, $n>1$, infinitely many have the same germ along~$C$.  Removing
from the sequence $\{U_n\}$ the neighborhoods for which the germ
of~$B_n$ is different and renumbering the remaining neigborhoods
consecutively, we arrive at the diagram of the
form~\eqref{large-cd} for which assertions~\eqref{a1} and~\eqref{a2}
above hold, as well as the following modification of
assertion~\eqref{a3}:
\begin{itemize}
\item [($3'$)]
there is an irreducible
curve $B\subset U_0$ such that $B\cap C\ne \varnothing$ and, for any $j>0$,
$\ph_j(B\cap U_j)$ is not a point, $\ph_j(B\cap U_j)\subset D_j$, and
$e(\ph_{j-1}|_{U_j},B\cap U_j)\ge e(\pi_{j-1},D_j)$.
\end{itemize}

Since now, for any $j>0$, $\ph_j(B\cap U_j)\subset D_j$ and
$\ph_j(B\cap U_j)$ is not a point, we conclude that all the curves
$D_j$ are the same; denote this irreducible plane curve
by~$D$. Putting $e(\pi_j,D)=e_j>1$, one has, from 
diagram~\eqref{large-cd} and Proposition~\ref{mult.ram}, 
\begin{equation}\label{rhs->/infty}
e(\ph_0|_{U_n},B\cap U_n)\geqslant e_1\cdot\ldots\cdot e_n\quad\text{for any
  $n>0$.}   
\end{equation}
The right-hand side of \eqref{rhs->/infty} tends to infinity as
$n\to\infty$; since the ramification index of $\ph_0$ at a
given irreducible curve cannot be infinite, we arrived at a
contradiction.

This completes the proof of the proposition and of Theorem~\ref{theorem}.
\end{proof}

\bibliographystyle{amsplain}
\bibliography{stab}

\end{document}